\def\Z{\mathbb{Z}}
\def\N{\mathbb{N}}
\def\r{\mathsf{r}}
\def\p{\mathsf{p}}
\def\ap{\mathsf{ap}}
\def\BHR{\mathop{\rm BHR}}
\def\MPP{\mathop{\rm MPP}}
\def\nf{near $1$-factor }
\newtheorem{defi}{Definition}
\newtheorem{prop}[defi]{Proposition}
\newtheorem{lem}[defi]{Lemma}
\newtheorem{rem}[defi]{Remark}
\newtheorem{corollar}[defi]{Corollary}
\newtheorem{thm}[defi]{Theorem}
\newtheorem*{conj}{Conjecture}
\theoremstyle{definition}
\newtheorem{ex}[defi]{Example}
\begin{document}

\title[A generalization of the problem of Mariusz Meszka]{A generalization of the
problem\\ of Mariusz Meszka}

\author{Anita Pasotti}
\address{DICATAM - Sez. Matematica, Universit\`a degli Studi di
Brescia, Via
Branze 43, I-25123 Brescia, Italy}
\email{anita.pasotti@unibs.it}

\author{Marco Antonio Pellegrini}
\address{Dipartimento di Matematica e Fisica, Universit\`a Cattolica del Sacro Cuore, Via
Musei 41,
I-25121 Brescia, Italy}
\email{marcoantonio.pellegrini@unicatt.it}
\subjclass[2010]{05C38}

\begin{abstract}
Mariusz Meszka has conjectured that given a prime $p=2n+1$ and a list $L$ containing $n$
positive integers not exceeding $n$  there exists a near $1$-factor in $K_p$ whose list of edge-lengths is $L$.
In this paper we propose a generalization of this problem to the case in which $p$ is an odd integer not necessarily prime.
In particular, we give a necessary condition for the existence of such a near $1$-factor for any odd integer $p$. We show that this
condition is also sufficient for any list $L$ whose underlying set $S$ has size $1$, $2$, or $n$.
Then we prove that the conjecture is true if
 $S=\{1,2,t\}$ for any positive integer $t$ not coprime with the order $p$ of the complete graph.
 Also, we give partial results when $t$ and $p$ are coprime. Finally, we present a complete solution for $t\leq 11$.
\keywords{near $1$-factor, complete graph, edge-length, cyclic decomposition, Skolem sequence}
\end{abstract}

 \maketitle

\section{Introduction}

Throughout this paper $K_v$ will denote the complete graph on $\{0,1,\dots,v-1\}$ for any positive integer $v$.
For the basic terminology on graphs we refer to \cite{Wbook}.
Following \cite{HR}, we define the {\it length} $\ell(x,y)$ of an edge $[x,y]$ of $K_v$ as
$$\ell(x,y)=min(|x-y|,v-|x-y|).$$
If $\Gamma$ is any subgraph of $K_v$, then the list of edge-lengths of $\Gamma$ is the multiset
$\ell(\Gamma)$ of the lengths (taken with their respective multiplicities) of  all the edges
of $\Gamma$.
The set of the edges of $\Gamma$ will be denoted by $E(\Gamma)$.
Also, by $\delta(\Gamma)$ we will mean the multiset
$$\delta(\Gamma)=\{|x-y| :  [x,y]\in E(\Gamma)\}.$$
Clearly, if all the elements of $\delta(\Gamma)$ do not exceed $\frac{v-1}{2}$
it results $\delta(\Gamma)=\ell(\Gamma)$.
For our convenience, if a list $L$ consists of
$a_1$ $1'$s, $a_2$ $2'$s, \ldots, $a_t$ $t'$s,
we will write $L=\{1^{a_1},2^{a_2},\ldots,t^{a_t}\}$, whose \emph{underlying set} is
 the set
$\{1,2,\ldots,t\}$.

The following conjecture \cite{BM,W} is due to Marco Buratti
(2007, communication to Alex Rosa).

\begin{conj}[Buratti]
For any prime $p=2n+1$
and any list $L$ of $2n$ positive integers not exceeding $n$, there exists a
Hamiltonian path $H$ of $K_p$ with $\ell(H)=L$.
\end{conj}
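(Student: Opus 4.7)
Buratti's conjecture is still open in general, so what follows is a speculative plan rather than an imminent proof. My approach would be induction on the number $k$ of distinct lengths appearing in $L$.

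For the base case $k=1$, $L=\{l^{2n}\}$ for some $l$ with $1 \le l \le n$. Since $p$ is prime, $\gcd(l,p)=1$, so the walk $0, l, 2l, \ldots, (p-1)l \pmod p$ is a Hamiltonian path of $K_p$ with every edge of length $l$. For $k=2$ one could hope to mimic the bridging arguments already in the literature, joining two monochromatic arithmetic subpaths of constant step by a single edge of the remaining length.

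For the inductive step, write $L = L' \sqcup \{l^m\}$ for some length $l$ and multiplicity $m$. One would apply the inductive hypothesis to obtain a Hamiltonian path $H'$ on a suitable $(p-m)$-subset of $\Z/p\Z$ realizing $L'$, and then try to append $m$ edges of length $l$ at an endpoint of $H'$ so as to traverse the missing vertices. For this to succeed, the complement of $V(H')$ must form an arithmetic progression of common difference $l$ abutting one of the endpoints of $H'$, and the inductive hypothesis as stated exerts no control over this.

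The main obstacle is precisely this lack of control on the \emph{shape} of the inductive object. A viable route is to strengthen the inductive statement so as to constrain the positions of the endpoints and of the unused vertices; this amounts to proving a substantially stronger realizability claim at every level. Primality of $p$ should enter essentially through the multiplier action $x \mapsto cx$ of $(\Z/p\Z)^{*}$, which gives the flexibility to reindex partial solutions and align their endpoints with the required arithmetic progression. Identifying an inductive invariant strong enough to drive the recursion yet still preserved under extension is the essential difficulty, and is the reason why Buratti's conjecture has resisted attack for over fifteen years.
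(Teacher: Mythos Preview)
The paper does not prove Buratti's conjecture; it merely states it as motivation and explicitly notes that ``the conjecture is still wide open.'' You correctly recognize this and offer only a speculative plan, so there is no discrepancy with the paper on that score.

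That said, your outline has a structural gap that you yourself flag but perhaps understate. The inductive step requires a Hamiltonian path on a proper subset of $\Z/p\Z$ realizing the residual list $L'$, but the inductive hypothesis as formulated concerns only Hamiltonian paths of the full $K_p$; it says nothing about paths on $p-m$ vertices, and in particular $p-m$ is no longer prime, so the multiplier action of $(\Z/p\Z)^*$ is no longer available at the lower level. Strengthening the statement to control endpoints and the geometry of the complement, as you suggest, is indeed the natural move, but it changes the problem into something closer to the Horak--Rosa generalization (Conjecture~\ref{HR iff B} in the paper), which is itself open. So the induction does not close: each attempt to make the hypothesis strong enough to drive the step produces a hypothesis at least as hard as the original conjecture. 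This is not a minor technicality but the essential obstruction, consistent with your own closing remark.
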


In \cite{HR} Peter Horak and Alex Rosa generalized Buratti's conjecture.
Such a generalization has been restated in a easier form in \cite{1235} as follows.

\begin{conj}[Horak and Rosa]\label{HR iff B}
Let $L$ be a list of $v-1$ positive integers not exceeding $\lfloor{v\over 2}\rfloor$.
Then there exists a Hamiltonian path $H$ of $K_v$ such that $\ell(H)=L$ if, and only if, the following condition holds:
\begin{equation}\label{B}
\left. \begin{array}{c}
\textrm{for any divisor $d$ of $v$, the number of multiples of $d$} \\
\textrm{appearing in $L$ does not exceed $v-d$.}
\end{array}\right.
\end{equation}
\end{conj}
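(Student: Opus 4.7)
The plan is to tackle the two directions of the biconditional separately, since the forward (``only if'') implication is routine while the converse is in fact the full open Horak--Rosa conjecture.

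For the ``only if'' direction, assume a Hamiltonian path $H$ of $K_v$ with $\ell(H)=L$ exists, and let $d$ be a divisor of $v$. An edge $[x,y]$ of $K_v$ has length a multiple of $d$ if and only if $x\equiv y\pmod d$. Hence the subgraph $H_d\subseteq H$ formed by the edges of length a multiple of $d$ splits, along the $d$ residue classes modulo $d$, into $d$ subgraphs, each on $v/d$ vertices and each itself a subgraph of the path $H$, hence a linear forest. Each piece therefore has at most $v/d-1$ edges, so $|E(H_d)|\le d(v/d-1)=v-d$, which is exactly condition (\ref{B}).

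For the substantive ``if'' direction my strategy would be a double induction: an outer induction on $v$, and an inner induction on the size of the underlying set $S$ of $L$. The base $|S|=1$ is forced by (\ref{B}) to $L=\{1^{v-1}\}$, realised by the path $0,1,\dots,v-1$. For $|S|=2$ I would appeal to the explicit cyclic constructions of Dinitz--Janiszewski / Horak--Rosa classifying realisable two-length lists. For the inductive step I would pick an appropriate length $\ell\in S$ (typically the largest, or one of minimal multiplicity), peel off one occurrence of $\ell$ from $L$, realise the reduced list by the inductive hypothesis, and then reinsert an edge of length $\ell$ by a local rerouting operation in the resulting path.

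The main obstacle is precisely that reinsertion step. Removing an occurrence of a length can push some divisor $d$ of $v$ just over the threshold $v-d$ in the reduced list, so condition (\ref{B}) may fail on the smaller problem; conversely, even when (\ref{B}) is preserved, splicing an edge of the required length into an existing Hamiltonian path without breaking Hamiltonicity is not automatic. It is exactly this interplay between the arithmetic constraint (\ref{B}) and the topological constraint ``Hamiltonian path'' that has kept the conjecture open, and that motivates the present paper to study the (more rigid) near $1$-factor analogue of Meszka, where the 1-regularity provides the extra structure needed to carry out such splicings.
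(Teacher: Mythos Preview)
The statement is presented in the paper as an open \emph{conjecture}, not as a theorem: the paper explicitly remarks that ``the conjecture is still wide open'' and offers no proof. So there is no proof in the paper to compare your attempt against.

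Your treatment of the ``only if'' direction is correct, and in fact it is the Hamiltonian-path analogue of the argument the paper gives in Proposition~\ref{nec} for the near $1$-factor setting: an edge of length divisible by $d$ has both endpoints in the same residue class modulo $d$; each class has $v/d$ vertices; the edges of $H$ inside a class form a linear forest on $v/d$ vertices, hence have at most $v/d-1$ edges; summing over the $d$ classes gives the bound $v-d$.

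For the ``if'' direction you correctly identify it as the open half of the Horak--Rosa conjecture, and you are candid that your inductive sketch runs into the well-known obstruction at the reinsertion step. That is an accurate assessment, not a proof. One small slip in the sketch: the base case $|S|=1$ is not forced by condition~(\ref{B}) to $L=\{1^{v-1}\}$; condition~(\ref{B}) only forces $\gcd(x,v)=1$ when $L=\{x^{v-1}\}$, and any such list is realised by the path $0,x,2x,\ldots,(v-1)x$ taken modulo $v$. In short, your proposal does not supply a proof of sufficiency, nor does the paper --- because none is known.
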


Following \cite{1235}, by $\BHR(L)$ we will denote the above conjecture for a given list
$L$.
Some partial results have been obtained about this problem, see \cite{CDF,DJ,HR,1235,12t},
but the conjecture is still wide open.\\
Recently, Mariusz Meszka formulated a very similar conjecture
concerning near $1$-factors. We recall that a \emph{near $1$-factor} of $K_{2n+1}$ is a subgraph of $K_{2n+1}$ consisting
in $n$ disjoint edges and one isolated vertex, while a \emph{$1$-factor} of $K_{2n}$ is a subgraph of $K_{2n}$ consisting
in $n$ disjoint edges.

\begin{conj}[Meszka]
For any prime $p=2n+1$
and any list $L$ of $n$ positive integers not exceeding $n$, there exists a
near $1$-factor $F$ of $K_p$ with $\ell(F)=L$.
\end{conj}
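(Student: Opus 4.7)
The plan is to exploit the primality of $p$ to reduce Meszka's conjecture to a sequence of structured matching questions indexed by the support $S$ of the multiset $L$. Using the regular action of $\mathbb{Z}_p$ by translation, I may assume without loss of generality that the isolated vertex of the near $1$-factor is $0$, so the task becomes: find a perfect matching of $\{1,2,\dots,p-1\}$ whose multiset of edge-lengths equals $L$.

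First I would settle the extremal cases. When $|S|=1$, so $L=\{t^{n}\}$, the fact that $\gcd(t,p)=1$ implies that the edges of length $t$ form a single Hamiltonian cycle $C_t$ in $K_p$; deleting vertex $0$ from $C_t$ leaves a path on $2n$ vertices whose unique perfect matching yields the desired near $1$-factor. When $|S|=n$, so $L=\{1,2,\dots,n\}$, the problem is equivalent to the existence of a Skolem-type numbering of order $n$, which is settled by the classical Skolem and hooked Skolem constructions according to the residue of $n$ modulo $4$.

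For general $L$ I would attempt an induction on $|S|$, or more refinedly on $\sum_t a_t=n$, by peeling off edges of a single length at a time. Concretely, choose $t\in S$ and try to realize the $a_t$ required edges of length $t$ as a subset of $C_t$, with their endpoints avoiding the growing set of vertices already used by previously placed edges of other lengths. A useful device is to order the elements of $S$ cleverly---say by decreasing multiplicity, or by treating first those lengths $t$ for which $C_t$ has many consecutive "free" arcs---and to perform local exchange moves along $C_t$, swapping two adjacent length-$t$ edges to shift their endpoints, in order to resolve collisions as they arise.

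The principal obstacle, and the reason Meszka's conjecture is still open in general, is that after removing vertices the cyclic symmetry of $\mathbb{Z}_p$ is destroyed, so the residual problem is no longer a Meszka instance and one must control the combinatorics of the punctured graph directly. When several multiplicities $a_t$ are simultaneously large, the required local exchanges tend to propagate around the different Hamiltonian cycles $C_t$ in incompatible ways, and I expect that a complete proof will require either a delicate probabilistic or character-sum argument exploiting the field structure of $\mathbb{F}_p$ beyond mere coprimality, or else the sort of careful recursive reduction that this paper develops for restricted supports such as $|S|\le 2$ and $S=\{1,2,t\}$; the latter route appears the most realistic for incremental progress, even though it does not immediately resolve the conjecture for arbitrary $L$.
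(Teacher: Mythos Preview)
The statement you are attempting to prove is Meszka's \emph{conjecture}; the paper does not prove it, and indeed explicitly presents it as open (verified only computationally for primes $p\le 23$). There is therefore no ``paper's own proof'' to compare against: the paper's contribution is to generalize the conjecture to all odd $v$, establish the necessary condition~(\ref{PP}), and prove sufficiency only for the restricted supports $|S|\in\{1,2,n\}$ and $S=\{1,2,t\}$ under additional hypotheses.

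Your proposal is not a proof either, and you acknowledge this in your final paragraph. The extremal cases are fine: for $|S|=1$ your Hamiltonian-cycle argument is equivalent to the paper's use of Remark~\ref{inv}, and for $|S|=n$ your appeal to Skolem/hooked Skolem sequences works, though the paper simply quotes the patterned starter of $\Z_{2n+1}$, which is more direct and uniform in $n$. The difficulty is entirely in the middle range, and here your sketch has a genuine gap. The ``peel off one length at a time'' induction does not close: once you delete the $2a_t$ endpoints of the length-$t$ edges, the residual vertex set is an arbitrary subset of $\Z_p$ of even size, and the remaining task---realize the reduced list on that subset---is strictly harder than the original conjecture, not an instance of it. Your proposed local exchange moves along $C_t$ do not come with any invariant or potential function guaranteeing termination, and the phrase ``order the elements of $S$ cleverly'' is doing all the work without any indication of what the clever order is. In short, the proposal identifies the right obstruction but offers no mechanism to overcome it; this is precisely why the paper restricts to small or structured supports and builds explicit (almost) perfect linear realizations that can be composed via Lemma~\ref{LemmaComposition}, rather than attempting a global inductive argument.
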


To best of our knowledge,
all the results on the problem proposed by Meszka are contained in \cite{R}, where
 Rosa proved that the conjecture is true when the elements of the list are
$\{1,2,3\}$ or $\{1,2,3,4\}$ or $\{1,2,3,4,5\}$. Also he proved that the conjecture is true
when the list $L$ contains a sufficient number of $1$'s, in detail, if $L=\{1^{a_1},2^{a_2},\ldots,n^{a_n}\}$
with $a_1\geq \frac{n^2}{2}$.
Moreover, with the aid of a computer, we have verified the validity of Meszka's conjecture
for all primes $p\leq 23$.\medskip

Working on this conjecture it is easy to see that the assumption $p$ prime is not necessary,
as it happens for Buratti's one. Indeed we found another condition which is necessary
and which led us to propose our conjecture.

\begin{conj}
Let $v=2n+1$ be an odd integer and  $L$ be a list of $n$ positive integers not
exceeding $n$.
Then there exists a near $1$-factor $F$ of $K_v$ such that $\ell(F)=L$ if, and only if, the following condition holds:
\begin{equation}\label{PP}
\left. \begin{array}{c}
\textrm{for any divisor $d$ of $v$, the number of multiples of $d$} \\
\textrm{appearing in $L$ does not exceed $\frac{v-d}{2}$.}
\end{array}\right.
\end{equation}
\end{conj}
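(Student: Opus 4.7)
The statement has two directions. The \emph{only if} direction is a counting argument and can be proved outright, while the \emph{if} direction is the substance of the conjecture; for the latter I would attempt only the special cases announced in the abstract.

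To prove necessity of \eqref{PP}, fix a divisor $d$ of $v$ and identify $V(K_v)$ with $\Z_v$. Since $d\mid v$, the residue classes modulo $d$ partition $\Z_v$ into $d$ blocks of size $v/d$, and $v/d$ is odd because $v$ is odd. For any edge $[x,y]$, setting $k=|x-y|$, the length $\ell(x,y)$ lies in $\{k,v-k\}$; since $d\mid v$ we have $d\mid k\iff d\mid(v-k)$, so $d\mid\ell(x,y)$ if and only if $x\equiv y\pmod d$, i.e.\ both endpoints lie in the same block. Now suppose $F$ is a near $1$-factor of $K_v$ with $\ell(F)=L$, and look inside an arbitrary block $B$: the edges of $F$ contained in $B$ form a matching, which covers an even number of vertices; since $|B|=v/d$ is odd, this number is at most $v/d-1$, so $B$ contributes at most $(v/d-1)/2$ edges of length a multiple of $d$. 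Summing across the $d$ blocks gives at most $d(v/d-1)/2=(v-d)/2$ such edges in total, which is precisely the bound in \eqref{PP}.

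For the sufficiency direction the plan is to split by the size of the underlying set $S$. When $|S|=1$, say $L=\{t^n\}$, condition \eqref{PP} applied to $d=\gcd(t,v)$ forces $\gcd(t,v)=1$, and an explicit near $1$-factor is obtained by taking the Hamiltonian cycle in $\Z_v$ with edges $\{i,i+t\}$ (of odd length $v$) and removing one vertex. When $|S|=n$, every length in $\{1,\dots,n\}$ appears exactly once, and the problem becomes a starter/Skolem-type placement to which classical sequence constructions should adapt. The case $|S|=2$ and, more generally, $S=\{1,2,t\}$ with $d=\gcd(t,v)>1$, should be handled via the block structure exposed by the necessity proof: the edges of length a multiple of $d$ must live inside the $d$ mod-$d$ blocks, with \eqref{PP} tight enough to pin down how many go in each block, while the edges of lengths $1$ and $2$ can be arranged to interconnect the blocks and absorb the parity mismatches, giving an inductive or recursive construction.

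The main obstacle is the coprime regime, typified by $S=\{1,2,t\}$ with $\gcd(t,v)=1$: here no nontrivial mod-$d$ partition constrains the placement of the $t$-edges, so the block argument gives no leverage. In that setting I would fall back on explicit constructions modeled on Skolem or Rosa-type sequences, combined with ad hoc adjustments depending on the residue of $v$ modulo $t$, and resort to a finite computer search for boundary values of $t$ as the final sentence of the abstract suggests.
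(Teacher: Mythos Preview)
Your necessity argument is correct and is exactly the paper's proof of Proposition~\ref{nec}: partition $\Z_v$ into the $d$ residue classes of odd size $v/d$, observe that an edge has length divisible by $d$ iff it lies inside a class, and bound the number of such edges by $d\cdot\frac{v/d-1}{2}=\frac{v-d}{2}$.

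Since the statement is a conjecture, only a plan is expected for sufficiency, and yours is broadly aligned with the paper's. Two points of divergence are worth flagging. First, for $|S|=1$ your phrasing ``take the Hamiltonian cycle $\{i,i+t\}$ and remove one vertex'' produces a Hamiltonian \emph{path}, not a near $1$-factor; you still need to pick the perfect matching of that even path. The paper does this in one stroke via the unit-multiplication trick (Remark~\ref{inv}): start from the trivial realization $\{[2i,2i+1]\}\cup\{2n\}$ of $\{1^n\}$ and multiply every vertex by $t$. Second, and more substantively, your plan for $|S|=2$ leans entirely on the mod-$d$ block structure, but that gives no traction when \emph{both} lengths are coprime to $v$. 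The paper's key device there is again Remark~\ref{inv}: if $\gcd(x,v)=1$ one multiplies by $x^{-1}$ to reduce $\{x^a,y^b\}$ to a list of the form $\{1^a,z^b\}$, which is then realized directly (Lemma~\ref{1y}). Your outline does not mention this reduction, so the coprime sub-case of $|S|=2$ is not covered by the block idea alone. For $|S|=n$ you are right that it is a starter; the paper simply quotes the patterned starter $\{[i,2n-1-i]\}\cup\{2n\}$. Your assessment of the $S=\{1,2,t\}$ situation --- block structure when $\gcd(t,v)>1$, ad hoc Skolem-type constructions plus case analysis modulo $t$ when $\gcd(t,v)=1$ --- matches what the paper actually does.
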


With the acronym $\MPP$, which stands for Meszka-Pasotti-Pellegrini, we will denote this more general conjecture.
 In particular $\MPP(L)$  will denote the conjecture for a given list $L$.
 Clearly, if $v$ is a prime then our conjecture reduces to Meszka's one. With the aid of a computer, we have verified its validity
for all odd integers $v\leq 23$. We point out that in the statement, the
actual conjecture is the sufficiency.
In fact we can prove that condition (\ref{PP}) is necessary.

\begin{prop}\label{nec}
The list $L$ of edge-lengths of any near $1$-factor of $K_v$ satisfies condition
\emph{(\ref{PP})}.
\end{prop}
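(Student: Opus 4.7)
The plan is to identify the vertex set of $K_v$ with the cyclic group $\Z_v$ and exploit the coset structure induced by a divisor $d$ of $v$. Fix such a divisor $d$; since $v$ is odd, so is $d$, and hence so is $v/d$. Let $H=\langle d\rangle\le \Z_v$, a subgroup of order $v/d$; its cosets partition the vertex set into $d$ classes, each of odd cardinality $v/d$.

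The first key observation I would record is that an edge $[x,y]$ has length $\ell(x,y)$ divisible by $d$ if and only if its two endpoints lie in the same coset of $H$. Indeed, because $d$ divides $v$, the conditions $d\mid |x-y|$ and $d\mid (v-|x-y|)$ are equivalent, so $d\mid \ell(x,y)$ iff $d\mid |x-y|$ iff $x\equiv y\pmod d$, i.e.\ iff $x$ and $y$ belong to the same coset of $H$.

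Now I would bound, coset by coset, the number of edges of $F$ whose endpoints both lie in a given coset. Since $F$ is a near $1$-factor, its edges are pairwise vertex-disjoint and at most one vertex of $K_v$ is left uncovered. Therefore, within a single coset of odd size $v/d$, the number of edges of $F$ contained in that coset is at most $\lfloor (v/d)/2\rfloor = (v/d-1)/2$ (the bound holds whether the isolated vertex lies in the coset or not).

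Summing this estimate over all $d$ cosets, and invoking the first observation to identify the edges contained in some coset with those whose length is a multiple of $d$, yields
\begin{equation*}
\#\{\lambda\in L : d\mid \lambda\}\;\le\; d\cdot \frac{v/d-1}{2}\;=\;\frac{v-d}{2},
\end{equation*}
which is exactly condition (\ref{PP}). I do not foresee a genuine obstacle here; the only delicate point is the elementary verification that divisibility of $\ell(x,y)$ by $d$ is equivalent to $x\equiv y\pmod d$, which relies crucially on $d\mid v$.
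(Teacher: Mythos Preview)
Your proof is correct and follows essentially the same argument as the paper's: both partition the vertex set into residue classes modulo $d$ (your cosets of $H=\langle d\rangle$), observe that an edge of length divisible by $d$ must lie within a single class, and then bound the number of such edges by $d\cdot\frac{v/d-1}{2}=\frac{v-d}{2}$ using that each class has odd size $v/d$. The only difference is cosmetic---you phrase things in terms of a subgroup and its cosets and spell out the equivalence $d\mid\ell(x,y)\Leftrightarrow x\equiv y\pmod d$ more explicitly, whereas the paper simply asserts it.
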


\begin{proof}
Let $F$ be a near $1$-factor of $K_v$ with $\ell(F)=L$ and let $d$ be a divisor of $v$.
Denote by $D$ the sublist of $L$ containing all the multiples of $d$ appearing in $L$, hence we have to prove that
$|D|\leq \frac{v-d}{2}$. Note that if the length of an edge is a multiple of $d$, the vertices
of such an edge have to be in the same residue class modulo $d$ and that
$F$ has exactly  $\frac{v}{d}$
vertices in each residue class modulo $d$. Also, $\frac{v}{d}$
is an odd number and this implies that with the elements
of each residue class modulo $d$ we can construct at most $\frac{\frac{v}{d}-1}{2}$ edges whose length is in $D$.
Since there are exactly $d$ residue classes modulo $d$ we have at most $\frac{v-d}{2}$
edges whose length is in $D$, namely $|D|\leq \frac{v-d}{2}$.
\end{proof}

Before giving the main result of this paper we would like to show some connections between
$\MPP$-problem
and \emph{graph decompositions}, as  done for $\BHR$-problem in \cite{1235}. For a general
background
on graph decompositions see \cite{BE}.
Reasoning in the same way as we have done in \cite{1235} for the $\BHR$-problem, one can easily
obtain that
$\MPP(L)$ can be reformulated in the following way.

\begin{conj}
A Cayley multigraph {\it $Cay[\Z_v:\Lambda]$ admits a cyclic decomposition into
near $1$-factors if and only if $\Lambda=L \ \cup \ -L$
with $L$ satisfying condition $(\ref{PP})$}.
\end{conj}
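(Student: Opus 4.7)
The plan is to follow the analogue of the argument given for $\BHR$ in \cite{1235}, establishing a correspondence between near $1$-factors of $K_v$ and cyclic decompositions of Cayley multigraphs. Identify $K_v$ with the Cayley graph $Cay[\Z_v:\Z_v\setminus\{0\}]$, so that $\Z_v$ acts on subgraphs by translation $[x,y]\mapsto [x+i,y+i]$. The key observation is that, since a near $1$-factor has a unique isolated vertex, its stabilizer under $\Z_v$ is trivial, so its orbit has size exactly $v$.

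For the implication from $\MPP(L)$ to the Cayley statement, suppose $\Lambda=L\cup -L$ with $L$ satisfying (\ref{PP}), and let $F$ be a near $1$-factor of $K_v$ with $\ell(F)=L$ provided by $\MPP(L)$. I would show that $\{F+i:i\in\Z_v\}$ is a cyclic decomposition of $Cay[\Z_v:\Lambda]$ into near $1$-factors: each edge $[x,y]\in F$ of length $\ell$ yields, under the $v$ translates, $v$ unordered edges each associated to the differences $\pm\ell$, and summing over the $n$ edges of $F$ reproduces the full edge multiset of $Cay[\Z_v:L\cup -L]$. Conversely, if $Cay[\Z_v:\Lambda]$ admits a cyclic decomposition $\mathcal{D}$ into near $1$-factors, pick $F\in\mathcal{D}$, set $L=\ell(F)$, and invoke Proposition \ref{nec} to conclude that $L$ satisfies (\ref{PP}). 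The orbit of $F$, of size $v$, is a subfamily of $\mathcal{D}$; counting edges (the orbit contributes $v|L|$ edges while $Cay[\Z_v:\Lambda]$ has $\frac{v|\Lambda|}{2}$ of them) forces $|\Lambda|=2|L|$, so the orbit equals $\mathcal{D}$, and matching multiplicities length by length gives $\Lambda=L\cup -L$; thus $F$ witnesses $\MPP(L)$.

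The main obstacle is not conceptual but is careful multigraph bookkeeping: since $F$ may contain several edges of the same length $\ell$, distinct translates $F+i$ and $F+j$ can share an edge, and one must verify that the total multiplicity with which a given edge $\{a,b\}$ of difference $\pm\ell$ appears in the union $\bigcup_i (F+i)$ equals the multiplicity $a_\ell$ of $\ell$ in $L$, which matches the multiplicity of $\ell$ in $Cay[\Z_v:L\cup -L]$. Once this routine identity is in place, both directions of the equivalence follow immediately.
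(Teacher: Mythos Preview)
The paper does not actually supply a proof here: the statement is presented as a reformulation of the $\MPP$ conjecture, with the equivalence deferred to ``reasoning in the same way as we have done in \cite{1235}.'' Your plan to carry out that analogous argument is exactly what the paper intends, and your forward direction---that the $v$ translates of a near $1$-factor $F$ with $\ell(F)=L$ give a cyclic decomposition of $Cay[\Z_v:L\cup -L]$---is correct, including the multiplicity bookkeeping you flag.

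There is, however, a slip in your converse. You write that ``counting edges \ldots\ forces $|\Lambda|=2|L|$,'' but from the orbit of $F$ being a subfamily of $\mathcal{D}$ you only obtain $v|L|\leq \tfrac{v|\Lambda|}{2}$, i.e.\ $|\Lambda|\geq 2|L|$, not equality. Indeed, if $\Lambda$ were unrestricted the implication would fail: taking two copies of the orbit of $F$ yields a cyclic decomposition of $Cay[\Z_v:2(L\cup -L)]$ into near $1$-factors, yet $2L$ violates condition~(\ref{PP}) already for $d=1$. The conjecture is meant to be read with $L$ a list of $n$ positive integers not exceeding $n$ (as in the $\MPP$ statement), so that $|\Lambda|=2n=v-1$ is part of the data. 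Under that reading the edge count gives $|\mathcal{D}|=\tfrac{v|\Lambda|/2}{n}=v$, and since the orbit of $F$ already has size $v$ (trivial stabiliser, as you note), you get $\mathcal{D}=\{F+i:i\in\Z_v\}$ and hence $\Lambda=\ell(F)\cup -\ell(F)$. With this small correction your argument goes through and coincides with the intended one.
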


For reader convenience we recall the definition of a Cayley multigraph.
A list $\Omega$ of elements of an additive group $G$ is
said to be \emph{symmetric} if $0 \notin \Omega$ and the multiplicities
of $g$ and $-g$ in $\Omega$ coincide for any $g \in G$.
If $\Omega$ does not have repeated elements then one can consider the \emph{Cayley
graph on $G$ with connection set $\Omega$}, denoted $Cay[G:\Omega]$, whose
vertex-set is $G$  and in which $[x,y]$ is an edge if and only if $x-y \in \Omega$.
Cayley graphs have a great importance in combinatorics
and they are precisely the graphs admitting an automorphism
group acting sharply transitively on the vertex-set (see, e.g., \cite{GR}).
If, more generally, the symmetric list $\Omega$ has repeated
elements one can consider the \emph{Cayley multigraph on $G$ with
connection multiset $\Omega$}, also denoted $Cay[G:\Omega]$
and with vertex-set $G$, where the multiplicity of an edge $[x,y]$ is
the  multiplicity of $x-y$ in $\Omega$ (see, e.g., \cite{BMnew}).

In the next sections of this paper we prove some results concerning the $\MPP$-problem that we can
summarize in the following theorem.

\begin{thm}\label{th2}
Let $L$ be a list of $n$ positive integers not exceeding $n$ and let $v=2n+1$.
Then, $\MPP(L)$ holds whenever the underlying set $S$ of $L$ satisfies one of the following conditions:
\begin{itemize}
\item[\rm{(1)}] $|S|=1,2$ or $n$;
\item[\rm{(2)}] $S=\{1,2,t\}$, where $t\geq 3$ is not coprime with $v$.
\end{itemize}
Furthermore, $\MPP(L)$ holds if $L=\{1^a, 2^b, t^ c\}$ with $a+b\geq \lfloor\frac{t-1}{2}\rfloor$.
\end{thm}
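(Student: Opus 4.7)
The plan is to treat the sub-claims separately and check, in each case, that condition~(\ref{PP}) of Proposition~\ref{nec} is the only obstruction.

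The boundary cases $|S|=1$ and $|S|=n$ are straightforward. For $|S|=1$ the list is $L=\{t^n\}$ and Proposition~\ref{nec} applied with $d=\gcd(t,v)$ forces $\gcd(t,v)=1$; then $F=\{[2it,(2i+1)t]\pmod v : 0\le i\le n-1\}$ is a near $1$-factor whose edges all have length $t$ and whose isolated vertex is $-t\pmod v$. For $|S|=n$ one has $L=\{1,2,\dots,n\}$, for which (\ref{PP}) is automatic (since $\lfloor n/d\rfloor\le(v-d)/2$ for every $d\mid 2n+1$), and the required near $1$-factor comes from a \emph{Skolem sequence} of order $n$ when $n\equiv 0,1\pmod 4$ or from a \emph{hooked Skolem sequence} when $n\equiv 2,3\pmod 4$: either produces $n$ pairwise disjoint pairs of vertices of $K_v$ realising each length in $\{1,\dots,n\}$ exactly once, with the isolated vertex placed at the (possibly hooked) hole.

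For $|S|=2$, say $L=\{s^a,t^b\}$, I would split according to whether $\gcd(s,v)$ and $\gcd(t,v)$ equal $1$. When both are $1$, initial segments of the $+s$- and $+t$-orbits of two suitably chosen starting vertices provide disjoint collections of edges of the required lengths. When one of the gcd's exceeds $1$, condition~(\ref{PP}) caps the corresponding multiplicity and the construction is carried out inside cosets of the cyclic subgroup generated by that length. For item~(2), $S=\{1,2,t\}$ with $d=\gcd(t,v)>1$, I would write $v=dm$ with $m$ odd and decompose $\Z_v$ into the $d$ cosets of $d\Z_v$: edges of length $t$ stay inside a coset while edges of length $1$ or $2$ bridge consecutive cosets, so one prescribes how many $t$-edges fall in each coset (subject to (\ref{PP})) and reassembles everything using the available $1$s and $2$s.

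The main obstacle, and the heart of the theorem, is the supplementary Langford-type statement for $L=\{1^a,2^b,t^c\}$ with $a+b\ge\lfloor(t-1)/2\rfloor$, which complements item~(2) by covering the case $\gcd(t,v)=1$ when sufficiently many short lengths are present. My plan is to build a ``$t$-block'' consuming one edge of length $t$ together with exactly $\lfloor(t-1)/2\rfloor$ edges of length $1$ or $2$ and occupying $t+1$ consecutive vertices, in such a way that translating by $t$ chains consecutive blocks and realises all $c$ edges of length $t$ simultaneously; any surplus $1$s and $2$s are absorbed by a short Skolem-type tail, and the isolated vertex lands in the residue class not visited. The delicate point will be matching the parities of $a,b,c$ and the residue of $t\pmod 2$, together with a direct check for small $t$; the bound $a+b\ge\lfloor(t-1)/2\rfloor$ is precisely what makes the block construction available for every $c$ admitted by~(\ref{PP}).
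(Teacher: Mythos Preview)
Your treatment of $|S|=1$ and $|S|=n$ is fine; the paper does essentially the same for $|S|=1$ and, for $|S|=n$, uses the patterned starter $F=\{[i,2n-1-i]:0\le i\le n-1\}\cup\{2n\}$ rather than Skolem sequences, which is slightly more direct but equivalent in spirit.

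The real problem is your plan for the clause $L=\{1^a,2^b,t^c\}$ with $a+b\ge\lfloor(t-1)/2\rfloor$. Your ``$t$-block'' spends $\lfloor(t-1)/2\rfloor$ short edges to produce \emph{one} edge of length $t$; chaining $c$ such blocks would require roughly $c\lfloor(t-1)/2\rfloor$ edges from $\{1,2\}$, whereas the hypothesis only guarantees $a+b\ge\lfloor(t-1)/2\rfloor$, and $c$ may be enormous compared with $a+b$ (indeed $c$ can be of order $n$ while $a+b$ stays near $t/2$). So the block idea is backwards: the short edges are the scarce resource, not the long ones. The paper's construction runs in the opposite direction. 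Writing $c=tq+r$ with $0\le r<t$, it first uses $q$ copies of the perfect linear realization of $\{t^t\}$ (Lemma~\ref{l^l}), each of which consumes $t$ edges of length $t$ and \emph{no} short edges at all, and then builds a single linear realization of $\{1^a,2^b,t^r\}$. The hypothesis $a+b\ge\lfloor(t-1)/2\rfloor\ge\lfloor(t-r)/2\rfloor$ is exactly what is needed to fill the $t-r$ positions between the two stretches of $r$ vertices carrying the $t$-edges; this is done by an explicit sequence, with a four-way case split on $t-r\pmod 4$ and the parity of $b$. Composition via Lemma~\ref{LemmaComposition} then glues the pieces.

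Your sketches for $|S|=2$ and for $S=\{1,2,t\}$ with $\gcd(t,v)>1$ point in the right direction (coset/row decompositions indexed by the relevant divisor) but are too thin to count as a plan: in both cases the paper needs a carefully laid out $d\times(v/d)$ matrix of vertices, a preliminary near $1$-factor on one column coming from the $\{1^{\tilde a},2^{\tilde b}\}$ case, and a parity analysis (on $a+b-\frac{d-1}{2}$, and on quotients and remainders of the residual short-edge counts by $v/d-1$) to ensure that the unused entries in each row occur in adjacent pairs so that the remaining $t$-edges can be placed. ``Initial segments of $+s$- and $+t$-orbits'' will not by itself give disjointness, and ``prescribe how many $t$-edges fall in each coset'' hides the step where the row parities are made to match.
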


The proof of Theorem \ref{th2} will follow from Propositions \ref{1n}, \ref{2lunghezze}, \ref{big} and \ref{12t-notcop}.

If $S=\{1,2,t\}$ and $\gcd(v,t)=1$, we also present some  partial results for the cases not covered by Theorem \ref{th2}.
In particular, we give a complete solution for $t\leq 11$.

\section{Linear realizations and their relationship with Skolem sequences}

A \emph{cyclic realization} of a list $L$ of $n$ positive integers not exceeding $n$
is a near $1$-factor $F$ of $K_{2n+1}$ such that
$\ell(F)=L$.
For example, given $L=\{1^2,4^3,6\}$ the near $1$-factor $F=\{[0,4],[1,2],[5,12],[6,10],[7,11],[8,9]\}\cup\{3\}$ of $K_{13}$
is a cyclic realization of $L$. Clearly if a list $L$ admits a cyclic realization we can say that
$\MPP(L)$ is true.\\
A \emph{linear realization} of a list $L$ with $n$ positive integers not exceeding $2n$
is a near $1$-factor $F$ of $K_{2n+1}$ such that
$\delta(F)=L$.
It is quite immediate that if the elements of $L$ do not exceed $n$ a linear realization of $L$ is nothing but a cyclic realization,
namely a near $1$-factor $F$ of $K_{2n+1}$ with $\delta(F)=\ell(F)=L$.
Following \cite{R}, we will say that a linear realization of a list $L$ is \emph{almost perfect} or \emph{perfect} if the isolated vertex
of the near $1$-factor is $2n-1$ or $2n$, respectively.\\
It is important to underline that there is a strong relationship between (almost) perfect
 linear realizations and Skolem sequences. We point out that Skolem sequences and their generalizations 
 (see, e.g., \cite{BA}) have revealed to be very useful in the construction of several kinds of combinatorial designs
(see, for example, the survey \cite{FM} and the references therein as well as \cite{BBRT,BU,WB}).
 In order to present this connection we recall the basic definitions, see \cite{S}.
 A \emph{Skolem sequence} of order $n$ is a sequence $S=(s_1,s_2,\ldots,s_{2n})$
of $2n$ integers satisfying the following conditions:
\begin{itemize}
\item[(1)] for every $k\in\{1,2,\ldots,n\}$ there exist exactly two elements $s_i,s_j\in S$ such that $s_i=s_j=k$;
\item[(2)] if $s_i=s_j=k$ with $i<j$, then $j-i=k$.
\end{itemize}
It is worth to observe that a Skolem sequence can also be written as a collection of ordered pairs
$\{(a_i,b_i)\ :\ 1\leq i\leq n,\ b_i-a_i=i \}$ with $\bigcup_{i=1}^{n}\{a_i,b_i\}=\{0,1,\ldots,2n-1\}$.
For instance, the Skolem sequence $S=(1,1,3,4,5,3,2,4,2,5)$ of order $5$ can be seen as the set 
$\{(0,1),(6,8),(2,5),(3,7),(4,9)\}$.
Hence we can conclude that a Skolem sequence is equivalent to  a perfect linear realization of a \underline{set}. 
For example, $S'=(1,1,3,4,5,3,2,4,2,5,0)$, obtained from $S$ adding $0$ at the end, is a perfect linear realization of the set $\{1,2,3,4,5\}$.
Also, a \emph{hooked Skolem sequence} of order $n$ is a sequence $HS=(s_0,s_1,\ldots,s_{2n})$
of $2n+1$ integers satisfying above conditions (1) and (2) and such that $s_{2n-1}=0$.
So we have that a hooked Skolem sequence is an almost perfect linear realization of a \underline{set}.\\
Hence, we can say that $\MPP$-problem  can be view as a generalization of Skolem sequences and
we propose the following more general definition.
Let $L=\{1^{a_1},2^{a_2},\ldots,$ $n^{a_n}\}$ be a list on the set $\{1,2,\ldots,n\}$
and let $k$ be an element of the set $\{1,2,\ldots,2n+1\}$.
We call $k$-\emph{extended Skolem sequence} of $L$ any sequence $S=(s_0,s_1,\ldots,s_{2n})$
for which it is possible to partition $\{1,2,\ldots,2n+1\}\setminus \{k\}$ into a set $T$
of $n$ ordered pairs $(x,y)$ with $x<y$ such that the set $T_i:=\{(x,y)\in T\ |\ s_x=s_y=y-x=i\}$
has size $a_i$ for $1\leq i\leq n$.  It is clear that a linear realization of $L$
is a $k$-extended Skolem sequence of $L$ for a suitable $k$. Also, it is
perfect or almost perfect when $k=2n$ or $2n-1$, respectively.
In these cases, in view of the classical definitions given above of Skolem sequences, one may speak
of an ordinary or hooked Skolem sequence of $L$, respectively.
For example, given $L_1=\{1,3,6^2\}$ the near $1$-factor $F_1=\{[0,6],[1,7],[3,4],[5,8]\}\cup\{2\}$ of $K_9$
is a linear realization of $L_1$. The corresponding $2$-extended Skolem sequence is
$S_1=(6_1,6_2,0,1,1,3,6_1,6_2,3)$, where we use $6_1$ and $6_2$ to distinguish the same length $6$ belonging to distinct pairs.
Take now
the near $1$-factor $F_2=\{[0,3],[1,4],[2,6]\}\cup\{5\}$ of $K_7$, it is an almost perfect realization of $L_2=\{3^ 2,4\}$
and
it is easy to see that the corresponding hooked Skolem sequence is
$S_2=(3_1,3_2,4,3_1,3_2,0,4)$. \\
Clearly a list $L$ can admit both a  not perfect linear realization and a perfect linear realization.
For instance $F_3=\{[0,6],[1,7],[2,5],[3,4]\}\cup\{8\}$ is a \emph{perfect} linear realization of the above list
$L_1=\{1,3,6^ 2\}$,
which corresponds to the Skolem sequence $S_3=(6_1,6_2,3,1,1,3,6_1,6_2,0)$.
It is easy to see that if there exists a perfect linear realization of a list $L$,
then there exists a $1$-factor $F$ of $K_{2|L|}$ such that $\delta(F)=L$.

 For convenience, in the following, by $\r L$, $\ap L$ and $\p L$
we will denote a linear realization, an almost perfect linear realization and a perfect linear realization of $L$,
respectively.

Given two lists $L_1$ and $L_2$ it is possible to obtain a linear realization of the list
$L_1\cup L_2$ \emph{composing} the linear realizations of $L_1$ and $L_2$.
We have to point out that it is not always possible to compose two linear realizations, since the existence
of the composition depends on the properties of the two realizations as shown in the following lemma, see \cite{R}.
When the composition exists it will be denoted by ``$+$''.

\begin{lem}\label{LemmaComposition}
Given two lists $L_1$ and $L_2$ we have:
\begin{itemize}
\item[\rm{(1)}] $\p L_1+\p L_2=\p(L_1\cup L_2)$;
\item[\rm{(2)}] $\p L_1+\ap L_2=\ap(L_1\cup L_2)$;
\item[\rm{(3)}] $\ap L_1+\ap L_2=\p(L_1\cup L_2)$;
\item[\rm{(4)}] $\p L_1+\r L_2=\r (L_1\cup L_2)$.
\end{itemize}
\end{lem}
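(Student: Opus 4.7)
The plan is to produce, in each case, an explicit composition of the given realizations. Write $n_i=|L_i|$ and let $F_i$ be the given realization of $L_i$, viewed as a near $1$-factor on $\{0,1,\ldots,2n_i\}$; the target will be a near $1$-factor on $\{0,1,\ldots,2(n_1+n_2)\}$. I will use two length-preserving operations: the shift $F\mapsto F+s$, replacing each edge $[x,y]$ by $[x+s,y+s]$, and the reflection $F\mapsto F^*$, sending $[x,y]$ to $[2n-x,2n-y]$ on a graph with vertex set $\{0,\ldots,2n\}$. Both preserve $\delta$, and reflection sends an isolated vertex $v$ to $2n-v$; in particular, applied to an almost perfect realization it produces one isolating vertex $1$.

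For cases (1), (2) and (4) I would take $F_1$ to be the $\p L_1$ and form the edge-disjoint union
\[
F_1\cup(F_2+2n_1).
\]
The shift places $F_2$ on $\{2n_1,\ldots,2(n_1+n_2)\}$; the two vertex sets overlap only at $2n_1$, which is isolated in $F_1$ by perfectness, so the union is a matching whose edge-length multiset is $L_1\cup L_2$. Its unique isolated vertex is the image of $F_2$'s isolated vertex under the shift, which equals $2(n_1+n_2)$, $2(n_1+n_2)-1$, or an unconstrained vertex in cases (1), (2), (4) respectively, yielding the three claims.

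The only step requiring a twist is case (3). A naive shift of $F_2$ by $2n_1-1$ would leave two isolated vertices, namely $2n_1-1$ coming from $F_1$ and $2n_1+2n_2-1$ from the shifted copy, so first I would reflect $F_2$ to move its isolated vertex from $2n_2-1$ to $1$, and then form
\[
F_1\cup(F_2^*+(2n_1-1)).
\]
The shifted--reflected copy occupies $\{2n_1-1,\ldots,2n_1+2n_2-1\}$ with isolated vertex $2n_1$, and its vertex set meets that of $F_1$ exactly in $\{2n_1-1,2n_1\}$; at each of these two vertices precisely one of the two graphs contributes an edge, so the union remains a matching of the correct size with edge-length multiset $L_1\cup L_2$. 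Its support is $\{0,\ldots,2(n_1+n_2)-1\}$, leaving $2(n_1+n_2)$ as the unique isolated vertex, so the composition is perfect.

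The main obstacle is precisely case (3): the whole point of the reflection is to convert the ``right-end hole'' of $\ap L_2$ into a ``left-end hole'' that interlocks with the right-end hole of $\ap L_1$ at the boundary vertices $2n_1-1$ and $2n_1$. Once this bookkeeping is set up, the remaining verifications in all four cases reduce to checking disjointness of the edge sets and the location of the unique isolated vertex, which is routine.
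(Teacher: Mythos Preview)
Your argument is correct and, for items (1), (2), (4), is exactly the paper's approach phrased in graph language rather than Skolem-sequence language: your shift $F_2+2n_1$ is precisely the concatenation of the sequence of $\p L_1$ with that of $\r L_2$ that the paper writes out. For item (3) the paper simply cites Rosa, but the example following the lemma (composing $\ap\{3^2,4\}$ with itself) reproduces exactly your reflect-then-shift construction, so your proposal matches the intended proof there as well.
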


\begin{proof}
(1) (2) (3) These items have been proved by Rosa, see \cite{R}.\\\
(4) Let $|L_1|=w$ and $|L_2|=z$. Let $S_1=(\ell_1,\ell_2,\ldots,\ell_{2w},0)$ be the sequence corresponding to
a perfect linear realization of $L_1$ and let
$S_2=(\bar\ell_1,\bar\ell_2,\ldots,\bar\ell_i,0,$ $\bar\ell_{i+1}, \ldots,\bar\ell_{2z})$ be the sequence corresponding to
a linear realization of $L_2$. Then $S=(\ell_1,\ell_2,\ldots,\ell_{2w},$ $\bar\ell_1,\bar\ell_2,\ldots,\bar\ell_i,0,\bar\ell_{i+1},\ldots,\bar\ell_{2z})$
is the sequence corresponding to a linear realization of $L_1\cup L_2$.
\end{proof}

\begin{ex}
Consider the previous linear realizations of the lists $L_1=\{1,3,$ $6^ 2\}$ and $L_2=\{3^ 2,4\}$. Using Lemma \ref{LemmaComposition} one gets a perfect
realization of $\{1^ 2,3^ 2,$ $6^ 4 \}$, an almost perfect realization of $\{1,3^ 3,4,6^ 2\}$, a perfect realization of $\{3^ 4,4^ 2\}$
and a linear realization of $\{1^ 2,3^ 2,6^ 4 \}$. Indeed, $\p \{1^ 2,3^ 2,6^ 4 \}=\p L_1+\p L_1$ can be obtained
using twice the sequence $S_3$ to get the sequence $(6_1,6_2,3_1,$ $1_1,1_1,3_1,6_1,$ $6_2,6_3,6_4,3_2,1_2,1_2,3_2,6_3,6_4,0)$.
Using the sequences $S_3$ and $S_2$ one gets the sequence $(6_1,6_2,3_1,1,1,3_1,6_1,6_2,3_2,3_3,4,3_2,3_3,0,4)$, corresponding to
$\ap\{1,3^ 3, 4,$ $6^ 2\}=\p L_1+\ap L_2$. Now, using twice the sequence $S_2$, one gets the sequence $(3_1,3_2,4_1,3_1,3_2,4_2,4_1,3_3,3_4,4_2,3_3,3_4,0)$
corresponding to $\p \{3^ 4,4^ 2\}= \ap L_2+\ap L_2$. Finally, using the sequences $S_3$ and $S_1$, one gets the sequence
$(6_1,6_2,3_1,1_1,1_1,3_1,6_1,$ $6_2,6_3,6_4,0,1_2,1_2,3_2,6_3,6_4,3_2)$, corresponding to $\r \{1^ 2, 3^ 2,6^ 4\}= \p L_1+\r L_1$.
\end{ex}

Given a list $L$ and a positive integer $q$, by $q\cdot \p L$ we will mean the perfect linear realization
$\underbrace{\p L+\p L+\ldots +\p L}_{q \ \mathrm{  times}}$. \\
In view of the above lemma we will look for linear realizations possibly (almost) perfect.

\begin{corollar}\label{odd}
There exists a perfect linear realization of $L=\{(2i+1)^{a_i}\ |\  i=0,\ldots,n, \forall n \in \mathbb{N}, a_i\geq a_j\ if\ i<j\}$.
\end{corollar}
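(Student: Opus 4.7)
The plan is to decompose $L$ by ``columns'' into sub-multisets of the form $\{1,3,5,\ldots,2m+1\}$ (one copy each), exhibit an explicit perfect linear realization of each such set, and glue them together via part~(1) of Lemma~\ref{LemmaComposition}.

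First, since $a_0\geq a_1\geq\cdots\geq a_n$, for every $j\in\{1,2,\ldots,a_0\}$ the set
\[
L_j:=\{2i+1 \, : \, a_i\geq j\}
\]
is an initial segment of the odd positive integers; that is, $L_j=\{1,3,\ldots,2m_j+1\}$, where $m_j$ is the largest index $i$ such that $a_i\geq j$. Counting, for each $i$, the number of columns $j$ for which $2i+1\in L_j$ gives exactly $a_i$, so $L=L_1\cup L_2\cup\cdots\cup L_{a_0}$ as multisets.

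Next, for each $m\geq 0$ I would produce an explicit $\p L_j$ on the set $\{1,3,\ldots,2m+1\}$ by the symmetric ``envelope'' construction: take the near $1$-factor of $K_{2m+3}$ whose edges are
\[
[i,\,2m+1-i],\qquad i=0,1,\ldots,m,
\]
with isolated vertex $2m+2$. The differences $(2m+1-i)-i=2m+1-2i$ range over $1,3,\ldots,2m+1$ as $i=m,m-1,\ldots,0$, so $\delta=\{1,3,\ldots,2m+1\}$ and the isolated vertex is the maximal one $2m+2=2|L_j|$, making the realization perfect. (Equivalently, it corresponds to the sequence $(2m+1,2m-1,\ldots,3,1,1,3,\ldots,2m-1,2m+1,0)$.)

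Finally, applying Lemma~\ref{LemmaComposition}(1) iteratively yields
\[
\p L_1+\p L_2+\cdots+\p L_{a_0}=\p\bigl(L_1\cup L_2\cup\cdots\cup L_{a_0}\bigr)=\p L,
\]
which is the desired perfect linear realization. There is no real obstacle here: the monotonicity hypothesis $a_i\geq a_j$ for $i<j$ is exactly what makes each column $L_j$ an initial segment of odd integers, and such initial segments admit the trivial symmetric perfect realization. The only point to check carefully is the bookkeeping confirming $\bigcup_j L_j=L$ as multisets, which is precisely the statement that conjugate partitions have the same total size.
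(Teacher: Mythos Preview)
Your proof is correct and essentially identical to the paper's: both build the same perfect realization of each initial segment $\{1,3,\ldots,2k+1\}$ via the symmetric sequence $(2k+1,\ldots,3,1,1,3,\ldots,2k+1,0)$ and then compose these using Lemma~\ref{LemmaComposition}(1). The only cosmetic difference is the bookkeeping of the decomposition---you index the pieces by ``columns'' $j=1,\ldots,a_0$, whereas the paper groups equal columns together and writes $\p L = a_n\cdot \p L_n + \sum_{i=0}^{n-1}(a_i-a_{i+1})\cdot \p L_i$, which is the same multiset union.
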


\begin{proof}
Given an odd integer $2k+1$, it is immediate that $S_k=(2k+1,2k-1,2k-3,\ldots,9,7,5,3,1,1,3,5,7,9,\ldots,2k-3,2k-1,2k+1,0)$
corresponds to a perfect linear realization of the list $L_k=\{1,3,5,\ldots,2k-1,2k+1\}$.
Hence, by Lemma \ref{LemmaComposition} a linear realization of $L$ is $\p L=a_n \cdot \p L_n + \sum_{i=0}^{n-1} (a_i-a_{i+1})\cdot \p L_i$.
\end{proof}

\begin{lem}[A. Rosa, \cite{R}]\label{l^l}
The list $\{x^x\}$ admits a perfect linear realization for each $x \geq 1$.
\end{lem}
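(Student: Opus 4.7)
The plan is to write down an explicit construction and check that it satisfies the definition of a perfect linear realization. Recall that a perfect linear realization of a list $L$ with $|L|=n$ is a near $1$-factor $F$ of $K_{2n+1}$ whose isolated vertex is $2n$ and such that $\delta(F)=L$, equivalently a partition of $\{0,1,\ldots,2n-1\}$ into $n$ ordered pairs $(a_i,b_i)$ with $b_i-a_i$ multiset equal to $L$. So for $L=\{x^x\}$ we need to partition $\{0,1,\ldots,2x-1\}$ into $x$ pairs, each having difference $x$, and then declare $2x$ to be the isolated vertex of the resulting near $1$-factor of $K_{2x+1}$.

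The obvious candidate is $F=\{[i,i+x]:0\leq i\leq x-1\}\cup\{2x\}$. I would first verify that the $2x$ vertex labels appearing in these $x$ edges are exactly $\{0,1,\ldots,x-1\}\cup\{x,x+1,\ldots,2x-1\}=\{0,1,\ldots,2x-1\}$, so together with the isolated vertex $2x$ we cover the full vertex set of $K_{2x+1}$ without repetition, and $F$ is indeed a near $1$-factor. Next, every edge $[i,i+x]$ has $|(i+x)-i|=x$, so $\delta(F)=\{x^x\}=L$, and since the isolated vertex is $2n=2x$, the realization is perfect by definition.

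That is essentially the entire argument; no obstacle arises because the pairing $i\mapsto i+x$ on $\{0,\ldots,x-1\}$ is the unique natural way to tile a block of length $2x$ with jumps of size $x$. Equivalently, in the language of extended Skolem sequences, one takes $S=(\underbrace{x,x,\ldots,x}_{x},\underbrace{x,x,\ldots,x}_{x},0)$, where the $i$-th entry and the $(i+x)$-th entry carry the common label $x$ for $i=1,\ldots,x$; this is an ordinary Skolem sequence of the list $\{x^x\}$, which by the correspondence recalled earlier in the section is the same thing as a perfect linear realization $\p\{x^x\}$.
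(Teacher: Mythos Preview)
Your proof is correct. The paper does not give its own proof of this lemma, citing Rosa \cite{R} instead; your explicit construction $F=\{[i,i+x]:0\leq i\leq x-1\}\cup\{2x\}$ is the natural one, and indeed matches the style of the immediately following lemma in the paper, which realizes $\{x^{x-1}\}$ via the analogous family $\{[i,i+x]:0\leq i\leq x-2\}\cup\{x-1\}$.
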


\begin{lem}
The list $L=\{x^{x-1}\}$ admits a linear realization for each $x \geq 1$.
If $x=2$, such a realization is almost perfect.
\end{lem}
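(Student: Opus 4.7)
The plan is simply to exhibit an explicit near $1$-factor. With $L=\{x^{x-1}\}$ we have $n=x-1$, so we must build a near $1$-factor of $K_{2x-1}$ on the vertex set $\{0,1,\ldots,2x-2\}$ consisting of $x-1$ pairwise disjoint edges, each of length exactly $x$.

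First I would observe that an edge of length $x$ on this vertex set must be of the form $[i,i+x]$ with $0\le i\le x-2$ (to ensure $i+x\le 2x-2$), so there are \emph{exactly} $x-1$ candidates,
$$[0,x],\ [1,x+1],\ \ldots,\ [x-2,2x-2],$$
precisely the number of edges we need. The next step is to check disjointness: the small endpoints $\{0,1,\ldots,x-2\}$ and the large endpoints $\{x,x+1,\ldots,2x-2\}$ are disjoint sets, and no vertex repeats in either. Their union covers every vertex of $\{0,1,\ldots,2x-2\}$ except $x-1$, which is therefore isolated. Setting $F=\{[i,i+x]\ :\ 0\le i\le x-2\}\cup\{x-1\}$ then gives a near $1$-factor of $K_{2x-1}$ with $\delta(F)=\{x^{x-1}\}=L$, i.e.\ a valid $\r L$. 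The tiny edge case $x=1$ is immediate, since $L=\emptyset$ and the lone vertex of $K_1$ is trivially a near $1$-factor.

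For the second assertion, when $x=2$ we have $n=x-1=1$ and hence $2n-1=1=x-1$, so the isolated vertex of the realization just constructed coincides with $2n-1$. By the definition recalled just before Lemma~\ref{LemmaComposition}, this means the realization is almost perfect, proving $\ap L$.

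I do not expect a serious obstacle here: both the count of available edges of length $x$ and their disjointness are essentially forced, and the ``almost perfect'' refinement in the case $x=2$ is a coincidence of small numbers rather than something that needs to be engineered. The only thing worth flagging is that for $x\ge 3$ the isolated vertex $x-1$ is strictly smaller than $2n-1=2x-3$, so the realization is in general \emph{not} (almost) perfect — which is consistent with the statement, as it only claims an almost perfect realization when $x=2$.
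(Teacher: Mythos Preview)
Your proof is correct and takes exactly the same approach as the paper: the paper exhibits the identical near $1$-factor $F=\{[i,i+x]\mid 0\le i\le x-2\}\cup\{x-1\}$ of $K_{2x-1}$ and notes that the $x=2$ case follows immediately. You simply supply more detail (the disjointness check, the observation that these are the only candidate edges, and the explicit identification $x-1=2n-1$ when $x=2$), all of which is sound.
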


\begin{proof}
The near $1$-factor $F=\{[i,i+x]\ |\ 0\leq i\leq x-2\}\cup\{x-1\}$
of $K_{2x-1}$ is such that
$\delta(F)=L$.
If $x=2$ the statement immediately follows.
\end{proof}

\begin{corollar}
If there exists a linear realization of a list $L=\{\ell_1^{a_1}, \ell_2^{a_2},\ldots,\ell_n^{a_n}\}$
then there exists a linear realization of the list
$L'=\{\ell_1^{a_1+k_1\ell_1}, \ell_2^{a_2+k_2\ell_2},\ldots,$ $\ell_n^{a_n+k_n\ell_n}\}$ for any $k_1,k_2,\ldots,k_n\in\mathbb{N}$.
Also, if the realization of $L$ is (almost) perfect, the realization of $L'$ has the same property.
\end{corollar}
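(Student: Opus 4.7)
The plan is to build the realization of $L'$ from the given realization of $L$ by adjoining $k_i$ copies of a perfect realization of $\{\ell_i^{\ell_i}\}$ for each $i$, and then invoke Lemma \ref{LemmaComposition} to glue everything together.

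First, I would use Lemma \ref{l^l}, which guarantees a perfect linear realization $\p\{\ell_i^{\ell_i}\}$ for every $i=1,\ldots,n$ (since $\ell_i\geq 1$). By part (1) of Lemma \ref{LemmaComposition}, the composition of two perfect linear realizations is a perfect linear realization, so iterating this gives a perfect linear realization
\[
\p M := k_1\cdot \p\{\ell_1^{\ell_1}\} + k_2\cdot \p\{\ell_2^{\ell_2}\} + \cdots + k_n\cdot \p\{\ell_n^{\ell_n}\}
\]
of the list $M=\{\ell_1^{k_1\ell_1},\ell_2^{k_2\ell_2},\ldots,\ell_n^{k_n\ell_n\}}$, where the empty composition (when all $k_i=0$) is vacuously handled.

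Second, I would compose $\p M$ with the given realization of $L$. If $L$ admits a linear realization $\r L$, apply part (4) of Lemma \ref{LemmaComposition} to obtain $\r L' = \p M + \r L$, which is a linear realization of $L'=M\cup L$. If $L$ admits an almost perfect realization $\ap L$, apply part (2) to obtain $\ap L' = \p M + \ap L$, yielding an almost perfect realization of $L'$. Finally, if $L$ admits a perfect realization $\p L$, apply part (1) to obtain $\p L' = \p M + \p L$, yielding a perfect realization of $L'$.

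There is no genuine obstacle here; the statement is essentially a bookkeeping consequence of Lemma \ref{l^l} combined with the four composition rules of Lemma \ref{LemmaComposition}. The only mild subtlety is to observe that the composition respects the (almost) perfect property precisely according to the cases listed in Lemma \ref{LemmaComposition}, so that the final clause of the statement is obtained by choosing the correct item (1), (2), or (4) depending on whether the initial realization of $L$ is perfect, almost perfect, or arbitrary.
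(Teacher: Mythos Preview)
Your proof is correct and follows essentially the same approach as the paper: both use Lemma \ref{l^l} to obtain $\p\{\ell_i^{\ell_i}\}$ for each $i$, assemble these into a single perfect realization $\sum_{i=1}^n k_i\cdot \p\{\ell_i^{\ell_i}\}$ via Lemma \ref{LemmaComposition}(1), and then attach the given realization of $L$ using item (4), (2), or (1) of Lemma \ref{LemmaComposition} according to whether $\r L$ is arbitrary, almost perfect, or perfect.
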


\begin{proof}
Let $\r L$ be a linear realization of $L$. By Lemma \ref{l^l} there exists $\p\{\ell_i^{\ell_i}\}$ for any $\ell_i \in L$.
A linear realization of $L'$ is $\r L'=\sum_{i=1}^n k_i \cdot \p \{ \ell_i^{\ell_i}\}+\r L$,  see Lemma \ref{LemmaComposition} (4). If $\r L$ is almost perfect or perfect the thesis follows from Lemma
\ref{LemmaComposition} (2) and (1), respectively.
\end{proof}

\begin{ex}
Consider, as above, $L_1=\{1,3,6^ 2\}$ and $\p L_1$ corresponding to the sequence $S_3=(6_1 ,6_2,3,1,1,3,6_1,6_2,0)$.
Then, for instance, $\p \{1,2^ 4, 3^ 4,6^ 2 \}=2\cdot \p \{ 2^2 \}+\p\{ 3^ 3 \}+\p L_1$ exists and can be obtained taking the sequence
$(2_1,2_2,2_1,$ $2_2, 2_3,2_4,2_3,2_4, 3_1,3_2,3_3,3_1,3_2,3_3, 6_1,6_2,3_4,1,1,3_4,6_1,6_2,0)$.
\end{ex}

\section{First cases}

Let $L$ be a list of $n$ positive integers not exceeding $n$
and let $S$ be its underlying set.
We start investigating $\MPP(L)$  in the following cases: $|S|=1,2$ and
$n$.

\begin{rem}\label{inv}
Let $F$ be a near $1$-factor of $K_{2n+1}$ with $\ell(F)=\{x_1^{a_1},
x_2^{a_2},\ldots,x_{t}^{a_t}\}$. Let $y$ be an integer coprime with $2n+1$ and let $r_i$ be the remainder of the division of $yx_i$ by $2n+1$. Define $\ell_i=r_i$ if $1\leq r_i \leq n$ and $\ell_i=2n+1-r_i$ if $n<r_i\leq 2n$.
Then, multiplying each
vertex of $F$ by $y$, it is possible to obtain
a near $1$-factor $F'$ of $K_{2n+1}$ such that
$\ell(F')=\{\ell_1^{a_1}, \ell_2^{a_2}, \ldots, \ell_t^{a_t} \}$.
\end{rem}

If $|S|=1$, all the edges have the same length.
If this length is $1$ it is immediate
to see that $F=\{[2i, 2i+1]\mid i=0,\ldots,n-1\}\cup\{2n\}$ is
a near $1$-factor $F$ of $K_{2n+1}$ such that $\ell(F)=\{1^n\}$. Let now $S=\{x \}$ with $2\leq x \leq n$.
By Proposition \ref{nec}  we have to consider only the case $\gcd(x,2n+1)=1$.
By Remark \ref{inv}, multiplying the vertices
of $F$ by $x$, we obtain a near $1$-factor $F'$ of $K_{2n+1}$ such that
$\ell(F')=\{x^n\}$. Hence we can conclude that  $\MPP(\{x^n\})$ is true for any positive
integers $n$ and any
positive integer $x$ not exceeding $n$.

If $|S|=n$, namely if $L=S=\{1,2,3,\ldots,n\}$,
a near $1$-factor of $K_{2n+1}$ such that $\ell(F)=L$ is, up to translations,
a \emph{starter} of $\Z_{2n+1}$.
In fact, a starter in the odd order abelian group $G$ (written additively), where $|G|=2n+1$,
is a set of unordered pairs $R=\{\{r_i,t_i\}\mid 1\leq i\leq n\}$ that satisfies:
\begin{itemize}
\item[(1)] $\{r_i\mid  1\leq i\leq n\} \cup\{t_i \mid  1\leq i\leq n\}=G\setminus \{0\}$
\item[(2)] $\{\pm (r_i-t_i)\mid 1\leq i\leq n\}=G\setminus\{0\}.$
\end{itemize}
Hence $\MPP(\{1,2,3,\ldots,$ $n\})$ is always true for any positive integer $n$,
in fact it is sufficient to take $F=\{[i,2n-1-i]\ |\ i=0,\ldots,n-1\}\cup\{2n\}$, namely, the so-called \emph{patterned
starter} of $\Z_{2n+1}$,  see
\cite{D}.

So, we proved the following.

\begin{prop}\label{1n}
Let $L$ be a list of $n$ elements not exceeding $n$ with underlying set $S$. Then, $\MPP(L)$ holds if either $|S|=1$ or $|S|=n$.
\end{prop}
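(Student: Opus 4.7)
The plan is to split on the cardinality of the underlying set $S$ and exhibit, in each case, an explicit near $1$-factor of $K_v$ (with $v=2n+1$) realizing $L$.

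For $|S|=1$, write $L=\{x^n\}$ with $1\le x\le n$. The first observation is that if $\gcd(x,v)=d>1$, then all $n$ elements of $L$ are multiples of $d$, but condition $(\ref{PP})$ caps this count at $\frac{v-d}{2}=n-\frac{d-1}{2}<n$, so $L$ violates $(\ref{PP})$ and $\MPP(L)$ holds vacuously. Hence we may assume $\gcd(x,v)=1$. Start from the explicit ``consecutive-pairs'' near $1$-factor
\[
F_1=\{[2i,2i+1]:0\le i\le n-1\}\cup\{2n\},
\]
which realizes $\{1^n\}$, and then apply Remark \ref{inv} with multiplier $y=x$: multiplying each vertex by $x$ modulo $v$ transports $F_1$ to a near $1$-factor with every edge of length $x$, giving the desired realization of $\{x^n\}$.

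For $|S|=n$, the list is forced to be $L=\{1,2,\dots,n\}$, and my candidate is the patterned starter
\[
F=\{[i,2n-1-i]:0\le i\le n-1\}\cup\{2n\}.
\]
The vertex $2n-1-i$ pairs with $i$ over $i=0,\dots,n-1$ so every vertex except $2n$ is used exactly once. The differences $|(2n-1-i)-i|=2n-1-2i$ range over the odd integers $1,3,\dots,2n-1$, and feeding each through $\ell(x,y)=\min(|x-y|,v-|x-y|)$ folds the large odd differences $k>n$ down to the even values $v-k=2n+1-k\le n$, while the small odd differences $k\le n$ stay put. A short parity check confirms that each value in $\{1,2,\dots,n\}$ occurs exactly once among these $n$ lengths, so $\ell(F)=L$.

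The main obstacle is genuinely minor: the only piece that needs any care is the length bookkeeping for the patterned starter, which reduces to noting that the map $k\mapsto \min(k,v-k)$ is a bijection from the $n$ odd integers in $\{1,\dots,2n-1\}$ onto $\{1,\dots,n\}$. Neither case needs the composition machinery of Lemma \ref{LemmaComposition}; both near $1$-factors are produced in closed form, and the two constructions together yield Proposition \ref{1n}.
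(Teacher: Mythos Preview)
Your proposal is correct and follows essentially the same approach as the paper: for $|S|=1$ you use the consecutive-pairs near $1$-factor for $\{1^n\}$ together with Remark \ref{inv} (after disposing of the non-coprime case via condition (\ref{PP})), and for $|S|=n$ you use the patterned starter $F=\{[i,2n-1-i]:0\le i\le n-1\}\cup\{2n\}$, exactly as the paper does. Your extra verification that $k\mapsto\min(k,v-k)$ bijects the odd integers in $\{1,\dots,2n-1\}$ onto $\{1,\dots,n\}$ is a welcome detail the paper leaves implicit.
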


Now, we consider the case $|S|=2$, i.e. $L=\{x^a,y^b\}$, where $0<x<y\leq
a+b=n$. Observe that by Proposition \ref{nec} we have to consider only the case
$\gcd(x,y,2n+1)=1$. We start considering the case $x=1$.

\begin{lem}\label{1y}
There exists a linear realization of any list $L=\{1^a, y^b\}$ whenever $a\geq \lfloor
\frac{y-1}{2}\rfloor $.
\end{lem}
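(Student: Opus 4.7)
The plan is to build an explicit near $1$-factor $F$ on the vertex set $\{0,1,\dots,2(a+b)\}$ of $K_{2(a+b)+1}$. Perform the Euclidean division $b=qy+s$ with $0\le s\le y-1$, and construct $F$ in two stages.

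\emph{Stage $1$ (the $y$-edges).} Group the required $b$ edges of length $y$ into $q$ full \emph{blocks} of $y$ edges each, followed by one \emph{partial block} of $s$ edges (absent if $s=0$). The $i$-th full block, $0\le i<q$, consists of the edges $[2iy+j,\,2iy+j+y]$ for $0\le j<y$; it occupies the $2y$ consecutive vertices $\{2iy,\dots,2(i+1)y-1\}$. The partial block consists of the edges $[2qy+j,\,2qy+j+y]$ for $0\le j<s$, occupying the two short intervals $\{2qy,\dots,2qy+s-1\}$ and $\{2qy+y,\dots,2qy+y+s-1\}$. Altogether this gives $qy+s=b$ pairwise vertex-disjoint edges, each of length $y$.

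\emph{Stage $2$ (the $1$-edges and the isolated vertex).} The unused vertices form the \emph{gap} $G=\{2qy+s,\dots,2qy+y-1\}$ of size $y-s$ (empty when $s=0$) and the \emph{tail} $T=\{2qy+y+s,\dots,2(a+b)\}$ of size $2a+s-y+1$; when $s=0$ the two merge into a single interval $\{2qy,\dots,2(a+b)\}$ of size $2a+1$. In either presentation $|G|+|T|=2a+1$, so exactly one of $G,T$ has odd size. I pair consecutive integers within $G$ and within $T$ (each such pair being a length-$1$ edge) and declare the unique leftover vertex to be the isolated vertex of $F$. This yields $a$ edges of length~$1$, completing the near $1$-factor.

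The only nontrivial check is that $|T|\ge 0$, i.e.\ $2a\ge y-s-1$. For $s=0$ this is immediate since the single remaining interval has size $2a+1\ge 1$. For $s\ge 1$ the hypothesis gives $2a\ge 2\lfloor(y-1)/2\rfloor\ge y-2\ge y-s-1$, so the interval sizes are non-negative and the pairing makes sense. Beyond this routine bookkeeping there is no real obstacle; the bound $a\ge\lfloor(y-1)/2\rfloor$ is exactly what is required for the tail to survive the worst case $s=1$.
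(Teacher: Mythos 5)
Your construction is correct and is essentially the paper's own proof: both divide $b=qy+s$, realize the $q$ full parts by blocks of $y$ consecutive edges of length $y$ (the perfect realization of $\{y^y\}$), let the remaining $s$ edges of length $y$ straddle a run of adjacent $1$-edges, and place the surplus $1$'s as adjacent pairs, with the hypothesis $a\geq\lfloor\frac{y-1}{2}\rfloor$ used precisely so that this remainder gadget fits. The only difference is presentational: you write down a single explicit vertex-level near $1$-factor, whereas the paper assembles the same pieces as Skolem-type sequences and glues them with the composition Lemma \ref{LemmaComposition}.
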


\begin{proof}
Set $v=2(a+b)+1$ and
write $b=qy+r$, where $0\leq
r < y$. Firstly, we consider the following sequences.
If $y-r$ is even, take
$$S_1=(y_1,y_2,\ldots, y_r, 1_1, 1_1,
1_2,1_2,\ldots,1_{\frac{y-r}{2}},1_{\frac{y-r}{2}},y_1,y_2,\ldots, y_r,0)$$
and if $y-r$ is odd, take
$$S_2=(y_1,y_2,\ldots, y_r, 1_1, 1_1,
1_2,1_2,\ldots,1_{\frac{y-r-1}{2}},1_{\frac{y-r-1}{2}},0,y_1,y_2,\ldots, y_r).$$
Clearly $S_1$ and $S_2$ are linear realizations of
$L'=\{1^{\lfloor\frac{y-r}{2}\rfloor},y^r\}$.
Also, observe that $S_1$ is perfect, while $S_2$ is
almost perfect if $r=1$.
Next we apply Lemma \ref{LemmaComposition}, obtaining $\r L=\p\{1^{a-\lfloor\frac{y-r}{2}\rfloor} \}+q\cdot
\p\{y^y\}+\r L'$.
\end{proof}

\begin{corollar}\label{Lemma12}
Given $L=\{1^a, 2^b\}$, then there exists a linear realization of $L$ for any $a,b\geq 1$.
\end{corollar}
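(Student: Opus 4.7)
The plan is to observe that this corollary is exactly the specialization $y=2$ of Lemma \ref{1y}. That lemma requires $a \geq \lfloor (y-1)/2 \rfloor$; for $y=2$ this reads $a \geq 0$, which is automatically satisfied under the corollary's assumption $a \geq 1$. So the proof will be essentially a one-line invocation of Lemma \ref{1y}.

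For concreteness, I would trace through the construction of Lemma \ref{1y} in this case, to check that every piece used in the composition actually exists. Writing $b = 2q + r$ with $r \in \{0,1\}$, the ``base piece'' $S_1$ or $S_2$ specializes to either the perfect realization $(1,1,0)$ of $\{1\}$ (when $r=0$) or the almost perfect realization $(2,0,2)$ of $\{2\}$ (when $r=1$). A linear realization of $L=\{1^a, 2^b\}$ is then assembled, via Lemma \ref{LemmaComposition}, by composing this base piece with $q$ copies of $\p\{2^2\}$ (which exists by Lemma \ref{l^l}) together with a perfect realization $\p\{1^k\}$ of the remaining $1$'s (available by Corollary \ref{odd}).

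I do not anticipate any real obstacle here: the only thing to verify is that every exponent arising in the composed pieces is non-negative. In the $r=0$ case this amounts to $a-1 \geq 0$, and in the $r=1$ case to $a \geq 0$; both are guaranteed by the hypothesis $a \geq 1$. Note that the edge case $a=0$ is not covered by this route but also is not claimed by the corollary; it would instead fall under Proposition \ref{1n} since then $|S|=1$.
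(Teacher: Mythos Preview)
Your proposal is correct and takes essentially the same approach as the paper: both simply invoke Lemma~\ref{1y} with $y=2$, noting that the hypothesis $a\geq\lfloor(y-1)/2\rfloor=0$ is trivially satisfied. Your trace-through of the construction is more detailed than the paper's one-line proof, but the argument is identical; the paper also records (as is implicit in your analysis of $S_1$ versus $S_2$) that the resulting realization is perfect when $b$ is even and almost perfect when $b$ is odd.
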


\begin{proof}
We apply Lemma \ref{1y}, with $y=2$ and so $\lfloor \frac{y-1}{2}\rfloor=0$. Observe that
if $b$ is even, we have found a perfect realization of $L$. If $b$ is odd, our realization
is almost perfect.
\end{proof}

It follows from  previous corollary that, given a list $L=\{1^a, 2^b\}$ there exists a near
$1$-factor $F$ of $K_{v}$, where $v=2(a+b)+1$, such that $\delta(F)=L$ whose isolated vertex
is $v-2$ if $b$ is odd, $v-1$ if $b$ is even. In the following, we will denote by $F+g$, $g\in \N$,
the graph obtained from $F$ by replacing each vertex $x$ of $F$ with $x+g$.
Clearly, $F+g$ is a near $1$-factor of the complete graph whose vertex set is $\{g,1+g,\ldots, 2(a+b)+g\}$ such that $\delta(F+g)=L$. This remark will be very useful in the next section.

Now we are ready to deal with the general case of two distinct edge lengths.
 \begin{prop}\label{2lunghezze}
$\MPP(\{x^a,y^b\})$ holds for any $a,b,x,y>0$.
\end{prop}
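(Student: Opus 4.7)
The plan is to reduce, via Remark \ref{inv}, to a list containing the length $1$ and then invoke Lemma \ref{1y}. By Proposition \ref{nec} the statement is vacuous unless $L=\{x^a,y^b\}$ satisfies condition (\ref{PP}); in particular, taking the divisor $d=\gcd(x,y,v)$ immediately forces $\gcd(x,y,v)=1$, so $\gcd(x,v)$ and $\gcd(y,v)$ are coprime divisors of $v$. Up to exchanging the roles of $(x,a)$ and $(y,b)$, I may assume $\gcd(x,v)=1$. Letting $k\equiv x^{-1}\pmod v$ and multiplying the vertex set by $k$ as in Remark \ref{inv} converts the problem of realising $\{x^a,y^b\}$ into that of realising $\{1^a,{y'}^b\}$, where $y'\in\{1,\dots,n\}$ is the length associated to the residue $ky\bmod v$. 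Because $y'\leq n$, any linear realisation produced by Lemma \ref{1y} is automatically cyclic, so the proposition follows at once whenever $a\geq\lfloor(y'-1)/2\rfloor$.

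Suppose instead $a<\lfloor(y'-1)/2\rfloor$, which forces $y'\geq 2a+3$. If also $\gcd(y,v)=1$, then $\gcd(y',v)=1$ and I would swap the two lengths by multiplying again, this time by $(y')^{-1}$, turning $\{1^a,{y'}^b\}$ into $\{w^a,1^b\}$ for some $w\in\{1,\dots,n\}$ satisfying $y'w\equiv\pm1\pmod v$. Lemma \ref{1y} applied to $\{1^b,w^a\}$ finishes provided $b\geq\lfloor(w-1)/2\rfloor$. If both orientations failed, we would have simultaneously $y'\geq 2a+3$ and $w\geq 2b+3$, whence $y'+w\geq 2(a+b)+6=2n+6$; but $y',w\leq n$ yields $y'+w\leq 2n$, a contradiction. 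So at least one of the two reductions lands inside the hypothesis of Lemma \ref{1y}, and the subcase in which both $\gcd(x,v)$ and $\gcd(y,v)$ equal $1$ is complete.

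The remaining situation — which I expect to be the main obstacle — is when $a<\lfloor(y'-1)/2\rfloor$ and $\gcd(y,v)=d>1$ (symmetrically $\gcd(x,v)>1$, and in particular the doubly non-coprime situation $\gcd(x,v)>1$, $\gcd(y,v)>1$, which can occur when $v$ has several distinct prime factors). Since multiplication by a unit preserves divisibility by $d$, no transformation via Remark \ref{inv} can make the length opposite the multiples of $d$ equal to $1$, and Lemma \ref{1y} is unavailable in the form used above. Here condition (\ref{PP}) gives the crucial structural information: the length-$y$ edges must live inside residue classes of $\Z_v$ modulo $d$, each of odd cardinality $v/d$, and (\ref{PP}) bounds $b$ by $d\cdot(v/d-1)/2$, which matches the maximum number of length-$y$ edges that can be packed, at most $(v/d-1)/2$ per class.

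To handle this remaining case I would choose the unit multiplier so that $y$ is sent to its smallest admissible value $d$, obtaining a list $\{{x'}^a,d^b\}$ with $\gcd(x',v)=1$, and then construct the near $1$-factor block by block: within each of the $d$ residue classes modulo $d$ use the starter-type realisation of $\{d^{(v/d-1)/2}\}$ (or a smaller sublist) provided by Proposition \ref{1n}, leaving one ``free'' vertex per class; then fill in the remaining vertices with the length-$x'$ edges, which are forced to cross between classes, using Corollary \ref{Lemma12} when $x'=1$ and Remark \ref{inv} or Lemma \ref{LemmaComposition} otherwise. The delicate point of this matching step — and the place where a small case analysis on $d$ and on the residues of $a$ and $b$ seems unavoidable — is to align the free vertices left by the length-$y$ blocks with the endpoints required by the length-$x'$ edges, so that exactly one vertex remains isolated.
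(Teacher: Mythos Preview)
Your treatment of the fully coprime subcase $\gcd(x,v)=\gcd(y,v)=1$ is complete and correct; the contradiction argument via $y'+w\geq 2n+6>2n\geq y'+w$ is a clean alternative to the paper's more direct observation that $\max(a,b)\geq\lfloor(z-1)/2\rfloor$ whenever $z\leq a+b$.

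However, the proposal is not a proof of the proposition, because the non-coprime case is only outlined, not resolved. You correctly flag it as ``the main obstacle'' and correctly extract from condition~(\ref{PP}) the bound $b\leq(v-d)/2$, but the last paragraph is a programme (``I would choose\ldots'', ``the delicate point\ldots\ is to align the free vertices'') rather than a construction. There are also two internal inconsistencies. First, the claim in your opening paragraph that one may take $\gcd(x,v)=1$ does not follow from the coprimality of $\gcd(x,v)$ and $\gcd(y,v)$: for $v=15$, $x=3$, $y=5$ both gcd's exceed~$1$. You acknowledge this doubly non-coprime situation parenthetically, yet your sketch then assumes $\gcd(x',v)=1$, which is false exactly there. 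Second, the ``starter-type realisation'' within residue classes and the subsequent alignment of free vertices is precisely the work to be done, and you have not done it.

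The paper carries this out with an explicit matrix construction. Writing the vertices as a $d_2\times(v/d_2)$ array $m_{i,j}=(i-1)x+(j-1)y\pmod v$, row-adjacent entries differ by $y$ and column-adjacent entries by $x$; one then places the length-$x$ edges as column-adjacent pairs in a controlled pattern (splitting on the parity of $a-\frac{d_2-1}{2}$) so that every row but one is left with an even number of consecutive unused entries, which are paired off as length-$y$ edges. This covers $\gcd(y,v)=d_2>1$ and $a\leq\frac{(d_2-1)v}{2d_2}$ regardless of $\gcd(x,v)$. The complementary range $a>\frac{(d_2-1)v}{2d_2}$ is handled by showing that then either $a>b$ (reducing to the coprime argument when $\gcd(x,v)=1$) or $b\leq\frac{(d_1-1)v}{2d_1}$ (reducing to the same matrix construction with $x$ and $y$ swapped). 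This explicit packing, including its parity bookkeeping, is the substance of the proof; your proposal does not supply it.
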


\begin{proof}
Let  $L=\{x^a,y^b\}$ with $x,y\leq a+b$ and let $v=2(a+b)+1$. We have to split the proof in four cases.
But firstly it is important to
observe that if $a\geq b$ then $a\geq \lfloor\frac{y-1}{2}\rfloor$, since $y\leq a+b$.
\medskip

\noindent Case 1.  $\gcd(x,v)=\gcd(y,v)=1$.
If $a\geq b$ we multiply the elements of $L$ by $x^{-1}$ and if $a< b$, we multiply by $y^{-1}$.
In both
cases, we obtain a list $L'$ with underlying set $S'=\{1,z\}$ and $z\leq a+b$. So, $L'$ satisfies the assumption of Lemma \ref{1y} and hence, there exists a near $1$-factor $F'$ such that $\ell(F')=L'$. By  Remark \ref{inv}, there exists a near $1$-factor $F$ of $K_v$
such that $\ell(F)=L$.
\medskip

\noindent Case 2. $\gcd(v,y)=d_2>1$ and $a\leq \frac{(d_2-1)v}{2d_2}$.
Clearly it has to be $\gcd(x,d_2)=1$. Also by Proposition \ref{nec}, $a\geq
\frac{d_2-1}{2}$.

We write the vertices of $K_v$, namely the integers $0,1,\ldots,v-1$, within a matrix $M$ of
$d_2$ rows and $\frac{v}{d_2}$ columns, where the element $m_{i,j}$ is
$(i-1)x+(j-1)y$ modulo $v$, for $i=1,\ldots,d_2$ and $j=1,\ldots,\frac{v}{d_2}$.
It is not hard to see that all the integers $m_{i,j}$ are distinct.
In fact if, by way of contradiction,  $m_{i,j}=m_{h,k}$,
then $(i-1)x+(j-1)y \equiv (h-1)x+(k-1)y\pmod v$. In particular, this means that $i
x\equiv hx \pmod{d_2}$, namely $d_2 \mid (i-h) x $.
Since $\gcd(x,d_2)=1$, it follows that $ d_2 \mid (i-h)$ and so
$i=h$, since $i,h\leq d_2$. Hence, $(j-1)y \equiv (k-1)y \pmod v$. Then, $v\mid (j-k)y$ and so
$\frac{v}{d_2}\mid (j-k) \frac{y}{d_2}$. We obtain that $\frac{v}{d_2} \mid (j-k)$, i.e.
$j=k$, since $j,k\leq \frac{v}{d_2}$.

Now, we construct the edges for the required near $1$-factor using exactly once all but one the elements
of the matrix $M$ as vertices.
Consider the following   $\frac{d_2-1}{2}$ edges of length $x$:
$[m_{2i+1,1},m_{2i+2,1}]$ for $i=0,\ldots,\frac{d_2-3}{2}$.

If $a-\frac{d_2-1}{2}$ is even, let $a-\frac{d_2-1}{2}=(\frac{v}{d_2}-1)q+r$ with $0\leq r< \frac{v}{d_2}-1$, note that $r$ is even.
We take the following edges of length $x$: $[m_{2i+1,j},m_{2i+2,j}]$ for $i=0,\ldots,q-1$
and $j=2,\ldots,\frac{v}{d_2}$ (if $q>0$) and $[m_{2q+1,j},m_{2q+2,j}]$ for $j=2,\ldots,r+1$.
It is easy to see that, since $r$ is even, there are an even number of elements
which are not used to construct the above edges, in each row of $M$ except for
the last one. Hence we construct the $b$ edges of length $y$ connecting pairs of adjacent
elements in each rows.
Since $a\leq \frac{(d_2-1)v}{2d_2}$ we have a sufficient number of edges of length $y$.
 In such a way we obtain a near $1$-factor of $K_v$ whose isolated vertex is
$m_{d_2,1}$.

If $a-\frac{d_2-1}{2}$ is odd, we construct the following edge of length $x$.
Observing that $m_{d_2,1}+x=(d_2-1)x+x=d_2x$ is an element of the first row, say $m_{1,k}$, we can take
the edge $[m_{d_2,1},m_{1,k}]$.
Hence we have to construct other $a-\frac{d_2+1}{2}$ edges of length $x$.
Since $a-\frac{d_2+1}{2}$ is even,
 let $a-\frac{d_2+1}{2}=(\frac{v}{d_2}-1)q+r$ with $0\leq r< \frac{v}{d_2}-1$, note that $r$ is even.
We take the following edges of length $x$: $[m_{2i,j},m_{2i+1,j}]$ for $i=1,\ldots,q$
and $j=2,\ldots,\frac{v}{d_2}$ (if $q>0$) and $[m_{2q+2,j},m_{2q+3,j}]$ for $j=2,\ldots,r+1$.
It is easy to see that, since $r$ is even, there are an even number of elements,
which are not used to construct the above edges, in each row of $M$ except for
the first one.
Hence, we can construct the edges of length $y$ as in the previous case.
\medskip

\noindent Case 3. $\gcd(v,y)=d_2>1$, $a> \frac{(d_2-1)v}{2d_2}$, $\gcd(x,v)=1$.
By the assumption on $a$, it follows that $a> b$. In fact we have
$a>\frac{(d_2-1)v}{2d_2}=\frac{d_2-1}{d_2}\frac{v}{2}\geq
\frac{2}{3}\frac{v}{2}=\frac{v}{3}= \frac{2(a+b)+1}{3}$ and so
$a> 2b+1>b$. Hence we can proceed analogously to Case 1, applying Remark \ref{inv} and Lemma \ref{1y}.
\medskip

\noindent Case 4. $\gcd(v,y)=d_2>1$, $a> \frac{(d_2-1)v}{2d_2}$, $\gcd(x,v)=d_1>1$.
Note that in this case $b \leq
\frac{(d_1-1)v}{2d_1}$. In fact, if, by way of contradiction,
we have $b >
\frac{(d_1-1)v}{2d_1}$ it results

\begin{small}
$$
\frac{v-1}{2} = a+b > \frac{(d_1-1)v}{2d_1} +\frac{(d_2-1)v}{2d_2}
  =  \frac{v}{2} \big( \frac{d_1-1}{d_1} + \frac{d_2-1}{d_2} \big)\\
  \geq   \frac{v}{2}
\big(\frac{2}{3}+\frac{2}{3}\big)=\frac{2v}{3} > \frac{v-1}{2},
$$
\end{small}

\noindent which clearly is a contradiction.
Hence we can apply the same process of Case 2, interchanging $x$ with $y$.
\end{proof}

\begin{ex}
Take now $L=\{6^9,10^{13}\}$. Here $v=45$,
$d_1=\gcd(6,45)=3$, $d_2=\gcd(10,45)=5$.
Note that we are in Case 2 of Proposition \ref{2lunghezze},
and that $a-\frac{d_2-1}{2}=9-2=7$ is odd.
Firstly, we construct the matrix
$$M=\left(\begin{array}{ccccccccc}
 {\bf0} &  10 &  20 &  {\bf30} &  40 &  5 &  15 & 25 & \emph{35}\\
  {\bf6} & {\bf16} & {\bf26} & {\bf36} & {\bf1} & {\bf11} & {\bf21} & 31 & 41 \\
  {\bf12} & {\bf22} & {\bf32} & {\bf42} & {\bf7} & {\bf17} & {\bf27} & 37 & 2 \\
{\bf18} & 28 & 38 & 3 & 13 & 23 & 33 & 43 & 8 \\
 {\bf24} & 34 & 44 & 9 & 19 & 29 & 39 & 4 & 14 \\
  \end{array} \right).$$
\end{ex}

Reasoning as in the proof of previous proposition we obtain the following edges of lengths $6$:
$[0,6], [12,18], [24,30], [16,22], [26,32], [36,42], [1,7],[11,17],$ $[21,27]$.
The elements used to construct these edges are highlighted in bold in the matrix $M$.
Now, we can construct the following edges of length $10$:
$[10,20], [40,5],[15,25],[31,$ $41], [37,2],[28,38], [3,13],[23,33],[43,8],[34,44], [9,19], [29,39], [4, 14]$. So, the isolated vertex is $35$.

\section{Near $1$-factors with edge lengths $1,2,t$}

In this section we investigate $\MPP(L)$ where $L=\{1^a,2^b,t^c\}$
for any integer $t>2$. Clearly, in view of the results of the previous section,
we can assume $a,b,c\geq 1$.

\begin{prop}\label{big}
Let $L=\{1^a,2^b,t^c\}$ with  $a+b\geq \lfloor\frac{t-1}{2}\rfloor$. Then there exists
a linear realization of $L$.
\end{prop}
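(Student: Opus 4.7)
The plan is to reduce the number of length-$t$ edges modulo $t$ by invoking the perfect realization $\p\{t^t\}$ of Lemma \ref{l^l}. Writing $c = qt + s$ with $0 \le s \le t-1$, iteration of Lemma \ref{LemmaComposition} (1) gives $q \cdot \p\{t^t\} = \p\{t^{qt}\}$, and Lemma \ref{LemmaComposition} (4) then yields $\r L = \p\{t^{qt}\} + \r\{1^a, 2^b, t^s\}$. The problem is thus reduced to producing a linear realization of $L_s := \{1^a, 2^b, t^s\}$ for every $s \in \{0, 1, \ldots, t-1\}$, still under the hypothesis $a + b \ge \lfloor (t-1)/2 \rfloor$.

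The case $s = 0$ is immediate from Corollary \ref{Lemma12}. For $1 \le s \le t-1$, I would construct $\r L_s$ directly: place the $s$ length-$t$ edges as $[i, i+t]$, $i = 0, \ldots, s-1$, occupying the vertices $\{0, \ldots, s-1\} \cup \{t, \ldots, t+s-1\}$ and leaving a middle gap $G = \{s, \ldots, t-1\}$ of size $m = t-s$ and a tail $T$ starting at vertex $t+s$. I would then fill $G$ with $\lfloor m/2 \rfloor$ edges of length $1$ or $2$, choosing the isolated vertex of the near $1$-factor inside $G$ when $m$ is odd, and realize the remaining multiset $\{1^{a-j}, 2^{b-k}\}$ on $T$ via Corollary \ref{Lemma12} applied to the translate of $T$ starting at $0$. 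The hypothesis $a+b \ge \lfloor(t-1)/2\rfloor \ge \lfloor m/2\rfloor$ guarantees that enough short edges remain; if $a$ alone is too small, one trades pairs of $1$'s for pairs of $2$'s inside $G$ via the zig-zag $[s,s+2],[s+1,s+3]$.

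The main obstacle will be aligning the parities between the gap and the tail. A matching of $G$ by length-$1$ and length-$2$ edges uses an even number $k$ of $2$'s when $m$ is even, whereas for $m$ odd one can arrange $k$ to have either parity by choosing the position $s+p$ of the isolated vertex inside $G$ (at an even offset $p$ both subpaths are even and $k$ is even; at an odd offset $p$ one uses a single length-$2$ bridging edge between the two odd-length subpaths, making $k$ odd). The resulting number $b - k$ of $2$'s on the tail must then match the parity requirements of Corollary \ref{Lemma12}. In the awkward regimes (such as $m = 1$, i.e.\ $s = t-1$, combined with $b$ odd) the rigid placement above fails, so I would modify it by using a \emph{jumped} arrangement of length-$t$ edges in which one index is skipped, as in $\{[0,5],[1,6],[2,7],[4,9]\}$ for $\{1,2,5^4\}$ in $K_{13}$: this creates a gap of size $2$ that can be bridged by an additional length-$2$ edge and repairs the parity. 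Working out the resulting small number of parity subcases is where the bulk of the argument will lie.
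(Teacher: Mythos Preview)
Your approach is essentially the paper's: reduce $c$ modulo $t$ via $q\cdot\p\{t^t\}$, then realize $\{1^a,2^b,t^s\}$ by laying the $t$-edges as the block $[i,i+t]$, $0\le i\le s-1$, and filling the gap of size $t-s$ and the tail with short edges. The parity subcases you anticipate are exactly the paper's four cases according to $t-s\pmod 4$ (each further split by the size and parity of $b$); the paper never needs your jumped arrangement, handling the odd-gap cases instead by placing the isolated vertex inside the gap and \emph{prepending} a perfect $\p\{1^{?},2^{?}\}$ to the block rather than appending a tail.
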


\begin{proof}
Let $c=tq+r$ with $0\leq r\leq t-1$. If $r=0$ the statement follows from Lemmas \ref{LemmaComposition} and \ref{l^l} and
Corollary \ref{Lemma12}.
So we can assume $r\geq 1$.
Since  $a+b\geq \lfloor\frac{t-1}{2}\rfloor$ then $a+b\geq \lfloor\frac{t-r}{2}\rfloor$.
We start constructing a linear realization of $L'=\{1^a,2^b,t^r\}$ and
we have to distinguish four cases according to the congruence class of $t-r$ modulo $4$.
\medskip

\noindent Case 1. $t-r\equiv 0 \pmod 4$.\\
If  $b\geq \frac{t-r}{2}$, we consider the sequence
$S=(t_1,t_2,\ldots,t_r,2_1,2_2,2_1,2_2,\ldots,2_{\frac{t-r}{2}-1},$ $ 2_{\frac{t-r}{2}}, 2_{\frac{t-r}{2}-1}, 2_{\frac{t-r}{2}},t_1,t_2,
\ldots ,t_r,0)$ which is a perfect realization of $L''$ $=\{2^{\frac{t-r}{2}},t^r\}$. Next, consider
$\r L'=\p L''+\r\{1^a, 2^{b-\frac{t-r}{2}}\}$.\\
Now, assume $b< \frac{t-r}{2}$. If $b$ is even, the sequence
$S=(t_1,t_2,\ldots ,t_r,2_1,2_2,2_1,$ $2_2,\ldots,2_{b-1},2_{b},
2_{b-1},2_{b},1_1,1_1,\ldots,1_{\frac{t-r-2b}{2}}, 1_{\frac{t-r-2b}{2}},t_1, t_2,\ldots
,t_r,0 )$ is a perfect linear realization of $L''=\{1^{\frac{t-r-2b}{2}}, 2^b,t^r\}$.
In this case $\p L'=\p L''+\p\{1^{a-\frac{t-r-2b}{2}}\}$.
If $b$ is odd, the sequence
$S=(t_1,t_2,\ldots ,t_r,2_1,2_2,2_1,2_2,\ldots, 2_{b-2},2_{b-1},
2_{b-2}, 2_{b-1},1_1,$ $1_1,\ldots, 1_{\frac{t-r-2b}{2}+1},1_{\frac{t-r-2b}{2}+1},t_1,t_2,
\ldots ,t_r,0)$
is a perfect linear realization of $L''=\{1^{\frac{t-r-2b}{2}+1} ,$ $2^{b-1},t^r\}$.
Hence, $\ap L'=\p L''+\ap\{1^{a-\frac{t-r-2b}{2}-1},2\}$.

\medskip
\noindent Case 2. $t-r\equiv 1 \pmod 4$.\\
Suppose $b$ even. If $b\geq \frac{t-r-1}{2}$ the sequence
$S=(t_1,t_2,\ldots,t_r,2_1, 2_2,2_1,2_2,\ldots, 2_{\frac{t-r-3}{2}},$ $2_{\frac{t-r-1}{2}},2_{\frac{t-r-3}{2}},2_{\frac{t-r-1}{2}},0,t_1,t_2,\ldots,t_r)$
is a linear realization of $L''=\{2^{\frac{t-r-1}{2}},$ $t^r\}$ and
$\r L'= \p\{1^a, 2^{b-\frac{t-r-1}{2}}\}+\r L''$.
If  $b <  \frac{t-r-1}{2}$, the sequence
$S=(t_1,t_2,\ldots,t_r,2_1,2_2,$ $2_1,2_2,\ldots,
2_{b-1},2_{b}, 2_{b-1},2_{b},
1_1,1_1,\ldots,1_{\frac{t-r-2b-1}{2}}, 1_{\frac{t-r-2b-1}{2}}, 0,t_1,t_2,\ldots,t_r)$
is a linear realization of $L''=\{1^{\frac{t-r-2b-1}{2}},2^b,t^r\}$. In this case,
$\r L'=\p\{1^{a-\frac{t-r-2b-1}{2}}\}+\r L''$.\\
Suppose now $b$ odd. If $b\geq \frac{t-r-1}{2}$ the sequence
$S=(t_1,t_2,\ldots,t_r,1_1,1_1,2_1,2_2,$ $2_1,2_2,\ldots,$ $2_{\frac{t-r-3}{2}},0,2_{\frac{
t-r-3}{2}},t_1,t_2,\ldots,t_r)$
is a linear realization of  $L''=\{1,$ $2^{\frac{t-r-3}{2}},$ $t^r\}$. Hence $\r L'=\p\{1^{a-1},2^{b-\frac{
t-r-3}{2}}\}+\r L''$. If $b <  \frac{t-r-1}{2}$ the sequence
$S=(t_1,t_2,\ldots,t_r,2_1,2_2,2_1,$ $2_2,\ldots,
2_b,0,2_b,
1_1, 1_1,\ldots,1_{\frac{t-r-2b-1}{2}},1_{\frac{t-r-2b-1}{2}},t_1, t_2,\ldots,
t_r)$
is a linear realization of  $L''=\{1^{\frac{t-r-2b-1}{2}},2^b,t^r\}$. In this case, we obtain
$\r L= \p\{1^{a-\frac{t-r-2b-1}{2}}\}+\r L''$.
\medskip

\noindent Case 3. $t-r\equiv 2 \pmod 4$.\\
If $b\geq \frac{t-r-2}{2}$, the sequence
$S=(t_1,t_2,\ldots ,t_r,1_1,1_1,2_1,2_2,2_1,2_2,\ldots, 2_{\frac{t-r}{2}-2},$
$2_{\frac{t-r}{2}-1},$ $2_{\frac{t-r}{2}-2},2_{\frac{t-r}{2}-1},t_1,t_2,\ldots
,t_r,0)$
is a perfect linear realization of $L''=\{1,2^{\frac{t-r}{2}-1} ,t^r\}$. Hence,
$\r L=\p L''+\r\{1^{a-1}, 2^{b-\frac{t-r}{2}+1}\}$.\\
Assume now $b< \frac{t-r-2}{2}$. If $b$ is even,
we take
the sequence
$S=(t_1,t_2,\ldots ,t_r,2_1,$ $2_2,2_1,2_2,$ $\ldots,2_{b-1},2_b,2_{b-1},2_b,
1_1,1_1,\ldots,1_{\frac{t-r-2b}{2}},1_{\frac{t-r-2b}{2}},$ $t_1,t_2,\ldots ,t_r,0)$, that
is a perfect linear realization of $L''=\{1^{\frac{t-r-2b}{2}},2^b,t^r\}$. In this case,
$\p L'=\p L''+\p\{1^{a-\frac{t-r-2b}{2}}\}$. If $b$ is odd, we take the
sequence
$S=(t_1,t_2,\ldots ,t_r,2_1,2_2,2_1,2_2,$ $\ldots,
2_{b-2},2_{b-1},2_{b-2},2_{b-1},
1_1,1_1,\ldots,1_{\frac{t-r-2b}{2}+1},1_{\frac{t-r-2b}{2}+1},$ $t_1,t_2,\ldots ,t_r,0)$,
that is  a  perfect linear realization of $L'=\{1^{\frac{t-r-2b}{2}+1} ,2^{b-1},t^r\}$.
So, $\ap L'=\p L''+ \ap\{1^{a-\frac{t-r-2b}{2}-1},$  $2\}$.

\medskip
\noindent Case 4. $t-r\equiv 3 \pmod 4$.\\
Suppose $b$  even. If $b\geq \frac{t-r-1}{2}$ the sequence
$S=(t_1,t_2,\ldots,t_r,1_1,1_1,2_1,2_2,2_1,2_2,$ $\ldots,
2_{\frac{t-r-5}{2}},
2_{\frac{t-r-3}{2}},2_{\frac{t-r-5}{2}},2_{\frac{t-r-3}{2}},
0,t_1,t_2,\ldots,t_r)$
is a linear realization of  $L''=\{1,2^{\frac{t-r-3}{2}} ,$ $t^r\}$. In this case,
$\r L'=\p\{1^{a-1},2^{b-\frac{t-r-3}{2}}\}+\r L''$.
If  $b <  \frac{t-r-1}{2}$ the sequence
$S=(t_1,t_2,\ldots,t_r,2_1,2_2,2_1,2_2,\ldots,2_{b-1},2_b,$
$2_{b-1},2_b,1_1,1_1,\ldots,1_{\frac{t-r-2b-1}{2}},$
$1_{\frac{t-r-2b-1}{2}},0,t_1, t_2,\ldots,t_r)$
is a linear realization of  $L''=\{1^{\frac{t-r-2b-1}{2}} ,2^b,t^r\}$. So, we have
$\r L'=\p\{1^{a-{\frac{t-r-2b-1}{2}}}\}+\r L''$. \\
Suppose now $b$ odd.  If $b\geq \frac{t-r-1}{2}$ the sequence
$S=(t_1,t_2,\ldots,t_r,2_1,2_2,2_1,2_2,$ $\ldots, 2_{\frac{t-r-1}{2}},
0,2_{\frac{t-r-1}{2}},t_1,t_2,\ldots,t_r)$
is a linear realization of  $L''=\{2^{\frac{t-r-1}{2}},t^r\}$. Here,
$\r L'=p\{1^a, 2^{b-\frac{t-r-1}{2}}\}+\r L''$.
If  $b <  \frac{t-r-1}{2}$ the sequence
$S=(t_1,t_2,\ldots,t_r,$ $2_1,2_2,2_1,2_2,\ldots, 2_{b},0,2_{b},1_1, 1_1,\ldots,1_{\frac{t-r-2b-1}{2}},1_{\frac{t-r-2b-1}{2}},
t_1,t_2,\ldots,t_r)$
is a linear realization of  $L''=\{1^{\frac{t-r-2b-1}{2}},2^b,t^r\}$. Hence,
$\r L'=\p\{1^{a-\frac{t-r-2b-1}{2}}\}+\r L''$ is a linear realization
 of  $L'=\{1^a,2^b,t^r\}$.\\
Now in order to obtain the thesis it is sufficient to
note that $\r L=q\cdot \p\{t^t\}+\r L'$.
\end{proof}

\begin{ex}
Let $L=\{1^4,2^2,12^{26}\}$. Since $4+2> \lfloor\frac{12-1}{2}\rfloor$
we are in the hypothesis of previous proposition. Note that in this case
$q=2$ and $r=2$, hence $t-r=12-2\equiv 2\pmod 4$ so we are in Case 3.
Also $b=2<4=\frac{t-r-2}{2}$, so, since $b$ is even, we take the following perfect linear realization of
$L'=\{1^3,2^2,12^2\}$:
$S=(12_1,12_2,2_1,2_2,2_1,2_2,1_1,1_1,1_2,1_2,1_3,1_3,12_1,12_1,0)$.
Now a perfect linear realization of $L$ is given by $2\cdot \p\{12^{12}\}+\p\{1^1\}+\p L'$.
\end{ex}

\begin{prop}\label{12t-notcop}
$\MPP(L)$ holds for any $L=\{1^a,2^b,t^c\}$,
with $\gcd(t,v)=d>1$, where $v=2(a+b+c)+1$.
\end{prop}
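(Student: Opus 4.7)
Set $d=\gcd(t,v)>1$ and write $v=dv_0$, $t=dt_0$, so $\gcd(t_0,v_0)=1$; note that $d$ is odd (hence $d\geq 3$) since $d\mid v$ and $v$ is odd. By Proposition~\ref{nec}, the number of multiples of $d$ in $L$ equals $c$ and satisfies $c\leq(v-d)/2$, so $a+b\geq(d-1)/2$. The plan splits according to whether the hypothesis of Proposition~\ref{big} is met.

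If $a+b\geq \lfloor(t-1)/2\rfloor$, Proposition~\ref{big} produces a linear realization of $L$; since every length in $L$ is at most $n$, this realization is automatically cyclic and gives the desired near $1$-factor. In particular this disposes of the case $t\mid v$ (i.e.\ $d=t$), in which $t$ is odd and $a+b\geq(d-1)/2=\lfloor(t-1)/2\rfloor$. So I would assume $d<t$ and $a+b\leq \lfloor(t-1)/2\rfloor-1$.

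For the remaining case I would mimic Case~2 of Proposition~\ref{2lunghezze} by introducing the $d\times v_0$ array
\[
m_{i,j}=(i-1)+(j-1)t \pmod v, \qquad 1\leq i\leq d,\ 1\leq j\leq v_0.
\]
As in that proof, $\gcd(t_0,v_0)=1$ makes $(i,j)\mapsto m_{i,j}$ a bijection onto $\{0,1,\dots,v-1\}$. In the array, the pair $[m_{i,j},m_{i,j+1}]$ is an edge of length $t$ for $1\leq j\leq v_0-1$, the pair $[m_{i,j},m_{i+1,j}]$ is an edge of length $1$ for $1\leq i\leq d-1$, and $[m_{i,j},m_{i+2,j}]$ is an edge of length $2$ for $1\leq i\leq d-2$ (meaningful since $d\geq 3$). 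The construction then amounts to (i) splitting $c=c_1+\cdots+c_d$ with each $c_i\leq(v_0-1)/2$, (ii) placing $c_i$ horizontal edges in row $i$ using a block of adjacent column pairs, and (iii) covering the remaining $2(a+b)+1$ free cells with $a$ vertical length-$1$ edges, $b$ vertical length-$2$ edges, and one isolated vertex. The flexibility in how one picks the $c_i$ and the columns occupied by each row is what should make (iii) possible.

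The main obstacle is step (iii): verifying that the free cells always admit a matching of the prescribed type. I expect this to require a case analysis on the residues modulo $4$ of $a$, $b$, and the $v_0-2c_i$, in the style of Proposition~\ref{big}, together with a careful choice of the $c_i$ that concentrates free cells into manageable column-blocks (for instance, one full column of $d$ free cells together with a short row-suffix of free cells in one specified row). The hypotheses are exactly calibrated for this: $d\geq 3$ provides enough room in each column for both length-$1$ and length-$2$ edges, the MPP bound $c\leq d(v_0-1)/2$ matches the maximum horizontal capacity, and $a+b\geq(d-1)/2$ ensures that any ``leftover'' full column of free cells can be absorbed by the available vertical edges.
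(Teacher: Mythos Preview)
Your high-level plan coincides with the paper's: reduce via Proposition~\ref{big} to $a+b<\lfloor(t-1)/2\rfloor$, then lay out the vertices in a $d\times v_0$ array and build the near $1$-factor from horizontal length-$t$ edges and vertical length-$1$/length-$2$ edges. The reduction and the bijection argument are fine, and your observation that the case $d=t$ is already covered by Proposition~\ref{big} is correct.

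The gap is precisely the step you flag as the ``main obstacle''. In your array $m_{i,j}=(i-1)+(j-1)t$, a length-$2$ edge $[m_{i,j},m_{i+2,j}]$ jumps over the cell $m_{i+1,j}$; so whenever you place such an edge, the middle cell in that column becomes an orphan that must be matched vertically with yet another cell. This is why your suggested free-cell shape (``one full column together with a short row-suffix'') cannot work: a row-suffix consists of cells all in the \emph{same} row, and none of them can be covered by vertical edges of length $1$ or $2$ without involving cells in other rows, which are not free. More generally, with your array the parity/adjacency bookkeeping you describe does not obviously close, and the case analysis you anticipate is the whole proof.

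The paper dissolves this difficulty with a single trick you are missing: it permutes the rows of the array so that the first column reads $0,2,1,3,4,6,5,7,\ldots$ (formally $m_{i,j}$ depends on $i\bmod 4$). With this reordering, \emph{adjacent} matrix rows alternate between vertical difference $2$ (odd $i$) and $1$ (even $i$). Hence both length-$1$ and length-$2$ edges are now between consecutive rows, and one can fill length-$2$ edges from the top row-pairs and length-$1$ edges from the bottom row-pairs, column by column, leaving in every row an even block of adjacent unused entries for the length-$t$ edges. The remaining argument is then just a short parity split on $a+b-\tfrac{d-1}{2}$, using Corollary~\ref{Lemma12} on the first column; no four-way case analysis modulo $4$ on $a,b$ is needed. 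Your framework becomes a proof once you incorporate this row permutation.
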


\begin{proof}
Clearly, $t\leq \frac{v-1}{2}$ and
by Proposition \ref{nec} we can assume $a+b\geq \frac{d-1}{2}$.
Also, in view  of Proposition \ref{big} we can assume $a+b<\lfloor\frac{t-1}{2}\rfloor$ which implies
$c>\frac{v-1}{2}-\lfloor\frac{t-1}{2}\rfloor$, hence we have also $c\geq\frac{v+d}{2d}$.
In fact, if, by way of contradiction, $\frac{v-1}{2}-\lfloor\frac{t-1}{2}\rfloor<\frac{v+d}{2d}$
we have $\frac{v-1}{2}-\frac{v+d}{2d}<\lfloor\frac{t-1}{2}\rfloor\leq \frac{v-3}{4}$, since $t\leq \frac{v-1}{2}$.
Namely $\frac{v-1}{2}<\frac{v}{d}$, which cannot be since $d\geq 3$.\\
Now, let $M=(m_{i,j})$ be the $d\times \frac{v}{d}$ matrix whose elements are so defined, for
$i=1,\ldots,d$ and $j=1,\ldots,\frac{v}{d}$:
$$m_{i,j}=\left\{\begin{array}{ll}
(i-1)+(j-1)t\pmod v &  \textrm{ if } i\equiv 0,1\pmod 4\\
i+(j-1)t\quad \pmod v &  \textrm{ if } i\equiv 2 \pmod 4\\
(i-2)+(j-1)t \pmod v  \quad &  \textrm{ if }  i\equiv 3 \pmod 4.
\end{array}\right.$$
It is easy to see that the elements of $M$ are the vertices of $K_v$.
Also, it is not hard to check that $M$ has at least $5$ columns. In fact, $\frac{v}{d}$ is an odd number and $\frac{v}{d}\neq3$.
Indeed if $\frac{v}{d}=3$ we would have $t=d$ and hence $a+b\geq \frac{d-1}{2}=\frac{t-1}{2}$ but we are assuming $a+b<\lfloor\frac{t-1}{2}\rfloor$.
Note that, for any $i$ and $j$,  $m_{i,j+1}-m_{i,j}=t \pmod v$, namely two elements adjacent on a row form an edge of length $t$.
Also $|m_{i+1,j}-m_{i,j}|=2$ if $i$ is odd, while $|m_{i+1,j}-m_{i,j}|=1$ if $i$ is even.
Our aim is to construct $a$ edges of length $1$ and $b$ edges of length $2$ in such a way that the elements of $M$
not used to obtain these edges appear in all but one  rows (say $R$) as an even number of adjacent entries.
In fact, if this holds, then it will be immediate to construct the edges of length $t$ with pairs of elements adjacent on a row.
Obviously, the isolated vertex of the near $1$-factor will belong to the row $R$.
So, it suffices to explain how to construct the edges of length $1$ and $2$. Then, the reader can easily check that the previous condition is always satisfied in the following constructions.
We have to split the proof into two cases.

\medskip
\noindent Case 1.  $a+b-\frac{d-1}{2}$ even.\\
By the assumptions on $a,b$ and $d$ it is possible to find two positive integers
$\tilde a$ and $\tilde b$ such that $\tilde a+\tilde b=\frac{d-1}{2}$, $a-\tilde a=2\alpha$
and $b-\tilde b=2\beta$, for suitable $\alpha$ and $\beta$.
Clearly, in general, it is possible more than one choice for the pair $(\tilde a,\tilde b)$.
By Corollary \ref{Lemma12} there exists a \nf $F$ of $K_d$ such that $\delta(F)=\{1^{\tilde a}, 2^{\tilde b}\}$. We start taking the edges of $F$.
Now we have to construct $2\alpha$ edges of length $1$ and $2\beta$ edges of length $2$.
Let $2\alpha=(\frac{v}{d}-1)\bar q+\bar r$ with $0\leq \bar r< \frac{v}{d}-1$, note that $\bar r$ is even.
Take the following
$2\alpha$ edges of length $1$:
$[m_{2i,j},m_{2i+1,j}]$ for $i=\frac{d-1}{2}-\bar q+1,\ldots,\frac{d-1}{2}$, $j=2,\ldots,\frac{v}{d}$
(if $\bar q>0$)
and $[m_{2(\frac{d-1}{2}-\bar q),j},m_{2(\frac{d-1}{2}-\bar q)+1,j}]$ for $j=2,\ldots, \bar r +1$.
Let $2\beta=(\frac{v}{d}-1)\tilde q+\tilde r$ with $0\leq \tilde r< \frac{v}{d}-1$, note that $\tilde r$ is even.
Take the following
$2\beta$ edges of length $2$: $[m_{2i+1,j},m_{2i+2,j}]$ for $i=0,\ldots,\tilde q-1$, $j=2,\ldots, \frac{v}{d}$
(if $\tilde q>0$)
and $[m_{2\tilde q+1,j},m_{2\tilde q+2, j}]$ for $j=\frac{v}{d}-\tilde r+1,\ldots,\frac{v}{d}$.\\
Since  $c\geq \frac{v+d}{2d}$, the edges so constructed involve distinct vertices of $K_v$.
Also, since $\bar r$ and $\tilde r$ are even one can check that the elements of $M$
not used to obtain these edges appear in all but one  rows as an even number of adjacent entries.
\medskip

\noindent Case 2. $a+b-\frac{d-1}{2}$ odd.\\
By the assumptions on $a,b$ and $d$ it is possible to find two positive integers
$\tilde a$ and $\tilde b$ such that $\tilde a+\tilde b=\frac{d-1}{2}$, $a-\tilde a=2\alpha+1$
and $b-\tilde b=2\beta$, for suitable $\alpha$ and $\beta$.
Also in this case,  it may be possible more than one choice for the pair $(\tilde a,\tilde b)$.
From Corollary \ref{Lemma12} there exists a near $1$-factor $F$ of $K_d$ such that
$\delta(F)=\{1^{\tilde a},2^{\tilde b}\}$ whose isolated vertex is in $\{d-2,d-1\}$, namely in $\{m_{d-1,1},m_{d,1}\}$.
We start taking the edges of $F$. Next, we take the edges of length $2$.
Let $2\beta =(\frac{v}{d}-1)\bar q + \bar r$ with $0\leq \bar r <  \frac{v}{d}-1$, note that $\bar r$ is even.
We consider the edges $[m_{2i+1,j},m_{2i+2,j}]$, for $i=0,\ldots,\bar q -1$ and $j=2,\ldots,\frac{v}{d}$
(if $\bar q>0$),
and the edges $[m_{2\bar q+1,j},m_{2\bar q+2,j}]$,  for $j=\frac{v}{d}-\bar r+1,\ldots,\frac{v}{d}$.\\
Now, we take the edges of length $1$.
Firstly we take the edge $[m_{d-1,3},m_{d,3}]$.
Next we distinguish two cases.
If $2\alpha\leq \frac{v}{d}-3$ we take the edges $[m_{d-1,j},m_{d,j}]$ for $j=4,\ldots, 2\alpha+3$.
If $2\alpha> \frac{v}{d}-3$,
let $2\alpha-(\frac{v}{d}-3) =(\frac{v}{d}-1)\tilde q + \tilde r$ with $0\leq \tilde r < \frac{v}{d}-1$, note that $\tilde r$ is even.
We  consider the edges
$[m_{d-1,j},m_{d,j}]$ for $j=4,\ldots, \frac{v}{d}$,
 $[m_{d-2i,j},m_{d-2i-1,j}]$,
for $i=1,\ldots,\tilde q$ and $j=2,\ldots,\frac{v}{d}$ (if $\tilde q>0$),
and  $[m_{d-2\tilde q-2,j},m_{d-2\tilde q-3,j}]$,
for $j=2,\ldots,\tilde r+1$.
Since  $c\geq \frac{v+d}{2d}$, the edges so constructed involved distinct vertices of $K_v$.
The isolated vertex is in $\{m_{d-1,2},m_{d,2}\}$.
\end{proof}

\begin{ex}
Let $L=\{1^{3},2^{2},21^{19}\}$. Hence, we have $v=49$, $d=7$ and $a+b-\frac{d-1}{2}=3+2-3=2$ is even.
So we are in Case 1 of Proposition \ref{12t-notcop}.
We start constructing the $7 \times 7$ matrix $M$:
$$M=\left(\begin{array}{ccccccc}
\emph{0} &  21 &  42 & 14 & 35 &  {\bf 7} & {\bf 28} \\
 {\bf 2} & 23 & 44  & 16 & 37 &  {\bf 9} & {\bf 30}\\
  {\bf 1} & 22 & 43 & 15 & 36 & 8 & 29\\
{\bf 3} & 24 & 45  & 17 & 38 & 10 & 31\\
{\bf 4} & 25 & 46 & 18 & 39 & 11 & 32\\
{\bf 6} & 27 & 48 & 20 & 41 & 13 & 34\\
{\bf 5} & 26 & 47 & 19 & 40 & 12 & 33
  \end{array} \right).$$
  Note that in this case we have more than one choice for $\tilde a$ and $\tilde b$, in fact we can choose
$(\tilde a, \tilde b)=(3,0)$ or $(1,2)$. In this example we choose $\tilde a=3$ and $\tilde b=0$.
So we take $F=\{[2,1], [3,4],[6,5]\}\cup\{0\}$.
Now we have $a-\tilde a=0$ and $b-\tilde b=2$, hence $\alpha =0$ and $\beta=1$.
So we have to take the following edges of length $2$: $[7,9]$ and
$[28,30]$.
The elements used up to now to construct the edges of length $1$ and $2$ are highlighted  in bold in the matrix.
Finally, it is easy to construct $19$ edges of length $21$ as follows: $[21,42],[14,35],[23,44],[16,37],[22,43],
[15,36],[8,29],[24,$ $45],[17,38],[10,31],[25,46],[18,39],[11,32],[27,48],[20,41],[13,34],[26,47],[19, 40],$ $[12,33]$.
Clearly, the isolated vertex is $0$.
\end{ex}

\begin{ex}
Let $L=\{1^{7},2^{4},25^{16}\}$. Hence, we have $v=55$, $d=5$ and $a+b-\frac{d-1}{2}=7+4-2=9$ is odd.
So we are in Case 2 of Proposition \ref{12t-notcop}.
We start constructing the $5 \times 11$ matrix $M$:
$$M=\left(\begin{array}{ccccccccccc}
{\bf 0} &  25 &  50 & 20 & 45 & 15 & 40 & 10 & 35 & {\bf5} & {\bf30} \\
 {\bf 2} & 27 & 52  & 22 & 47 & 17 & 42 & 12 & 37 & {\bf7} & {\bf32}\\
  {\bf 1} & 26 & 51 & 21 & 46 & 16 & 41 & 11 & 36 & 6 & 31\\
{\bf 3} & \emph{28} & {\bf53}  & {\bf23} & {\bf48} & {\bf18} & {\bf43} & {\bf13} & {\bf38} & 8 & 33\\
4 & 29 & {\bf54} &  {\bf24} & {\bf49} & {\bf19} & {\bf44} & {\bf14} & {\bf39} & 9 & 34
  \end{array} \right).$$
\noindent
Note that in this case we have more than one choice for $\tilde a$ and $\tilde b$, in fact we can choose
$(\tilde a, \tilde b)=(0,2)$ or $(2,0)$. In this example we choose $\tilde a=0$ and $\tilde b=2$.
So we take $F=\{[0,2], [1,3]\}\cup\{4\}$.
Now we have $a-\tilde a=7$ and $b-\tilde b=2$, hence $\alpha =3$ and $\beta=1$.
So we have to take the following edges of length $2$: $[5,7]$ and
$[30,32]$.
Take now $[m_{4,3},m_{5,3}]=[53,54]$.
  Also, take the following edges of length $1$: $[23,24],[48,49],[18,19],[43,44],[13,14],[38,39]$.
The elements used up to now to construct the edges of length $1$ and $2$ are highlighted  in bold in the matrix.
Finally, it is easy to construct $16$ edges of length
$25$ as follows: $[25,50],[20,45],[15,40],[10,35],[27,52],[22,47],[17,42],[12,37],[26,51],[21, 46], [16,$ $41],[11,36],[6,31],[8,33],[4,29],[9,34]$.
Clearly, the isolated vertex is $28$.
\end{ex}

In order to complete the study of $\MPP(\{1^a, 2^ b, t^ c\})$ we are left to consider the case
$\gcd(t,v)=1$ and $a+b<\lfloor\frac{t-1}{2}\rfloor$, where $v=2(a+b+c)+1$.
In the following  we give some partial results about this case.
So we take $a+b<\lfloor\frac{t-1}{2}\rfloor$, but not ``too small''.
Clearly,  we believe that $\MPP$ conjecture holds also for $a+b$ ``very small''
 and we will give some examples for these cases at the end of this section.

\begin{prop}\label{12t-cop}
Let $L=\{1^a,2^b,t^c\}$, where $t$ is an integer with $t>2$. Let $v=2(a+b+c)+1$ and
let $r'$ be the remainder of the division of $v$ by $2t$.
Assume that $a+b\geq \frac{r'-1}{2}$ if $\lfloor\frac{v}{t}\rfloor$ is even
and that $a+b\geq t- \frac{r'+1}{2}$ if $\lfloor\frac{v}{t}\rfloor$ is odd.
Then there exists a near $1$-factor $F$ of $K_v$
such that $\ell(F)=L$.
\end{prop}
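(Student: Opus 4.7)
Write $v = 2qt + r'$ with $q = \lfloor v/(2t) \rfloor$ and $0 \leq r' < 2t$; since $v$ is odd, $r'$ is odd, and the two cases of the hypothesis correspond to $r' < t$ (i.e.\ $\lfloor v/t \rfloor = 2q$, Case A) and $r' \geq t$ (i.e.\ $\lfloor v/t \rfloor = 2q+1$, Case B). The plan is to partition the vertex set $\{0, 1, \ldots, v-1\}$ into $q-1$ consecutive blocks of length $2t$ together with a final enlarged region $B^*$ of size $2t + r'$. On each of the $q-1$ blocks I place the perfect linear realization $\p\{t^t\}$ supplied by Lemma~\ref{l^l}, and by Lemma~\ref{LemmaComposition}(4) the problem reduces to constructing a linear realization of $L^* = \{1^a, 2^b, t^{c^*}\}$ inside $B^*$, where $c^* = c - (q-1)t$ and $|L^*| = t + (r'-1)/2$.

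A short computation shows that the hypothesis on $a+b$ is equivalent to $c^* \leq t$ in Case A and $c^* \leq r'$ in Case B. A residue-class analysis of the length-$t$ subgraph of $B^*$ confirms that these are exactly the maximum matching sizes in the length-$t$ subgraph on $B^*$: in each residue class modulo $t$, the elements of $B^*$ form a path of length $2$ or $3$ (Case A), respectively $3$ or $4$ (Case B), and summing the path-matching bounds across the $t$ classes gives $t$ or $r'$ accordingly. Hence it suffices to realize every admissible triple $(a, b, c^*)$ summing to $t + (r'-1)/2$ inside $B^*$.

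I would proceed by explicit construction from a ``baseline''. In Case A, take the $t$ length-$t$ edges $[j, j+t]$, $j = 0, \ldots, t-1$, on the first $2t$ vertices of $B^*$, together with a near $1$-factor of $K_{r'}$ on the last $r'$ vertices realizing any chosen mix $\{1^{a_R}, 2^{b_R}\}$ with $a_R + b_R = (r'-1)/2$ (obtainable from Corollary~\ref{Lemma12}, with trivial direct constructions when $a_R = 0$ or $b_R = 0$). In Case B, the baseline uses $r'$ length-$t$ edges placed so as to leave a central window of $2t - r'$ consecutive vertices absorbing the obligatory short-edge matching. From any baseline one decreases the length-$t$ count two at a time via ``column swaps'': $\{[j, j+t], [j+1, j+t+1]\} \mapsto \{[j, j+1], [j+t, j+t+1]\}$ delivers two length-$1$ edges, and the analogous swap with columns $j, j+2$ delivers two length-$2$ edges. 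Balancing the numbers of each type of swap against the chosen $(a_R, b_R)$ reaches any prescribed $(a, b, c^*)$ whose length-$t$ deficit is even after a suitable choice of $(a_R, b_R)$.

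The main obstacle is parity: a column swap changes $a$ and $b$ by an even number, so the block portion alone only spans half the possible distributions, and the parity of $(a, b)$ must be supplied by the leftover. This works whenever the leftover is large enough to offer both parities, but it breaks down in borderline cases (chiefly $r' = 1$ in Case A, where no short edge can live in the leftover at all, and the symmetric case $2t - r' = 1$ in Case B). In these situations I would use a ``boundary move'' that crosses the junction between the block portion and the leftover with an additional length-$t$ edge, producing a configuration with a single unbalanced short-edge insertion that flips the parity of $a + b$. A short case analysis on $(a + b) \bmod 2$ then completes the construction.
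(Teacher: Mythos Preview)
Your overall strategy is essentially the paper's. Both proofs arrange the vertices in a $t$-row array, cover most rows with length-$t$ edges, and place the length-$1$ and length-$2$ edges so that every row but one retains an even number of adjacent free entries. The paper works directly with the full $t\times(q{+}1)$ matrix $m_{i,j}=(i-1)+(j-1)t$, whereas you first peel off copies of $\p\{t^t\}$ and work inside a residual block $B^*$ of size $2t+r'$; this is only a cosmetic difference. Note that the paper explicitly invokes Propositions~\ref{big} and~\ref{12t-notcop} at the outset to reduce to $a+b<\lfloor\tfrac{t-1}{2}\rfloor$ and $\gcd(t,v)=1$; you need this too, since otherwise $c^*=c-(q-1)t$ may be negative and your reduction to $B^*$ collapses.

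Where your sketch has a real gap is the parity analysis. You say the leftover ``offers both parities'' and that the obstruction is confined to the borderline cases $r'=1$ (resp.\ $2t-r'=1$). That is not correct: since $a_R+b_R=\tfrac{r'-1}{2}$ is fixed, the leftover only supplies parity classes with $a_R+b_R\equiv\tfrac{r'-1}{2}\pmod 2$, and your swaps preserve $a\bmod 2$ and $b\bmod 2$ separately. Hence whenever $a+b\not\equiv\tfrac{r'-1}{2}\pmod 2$ (which is half of all admissible lists, not a borderline), the baseline-plus-swaps scheme cannot reach $(a,b)$. Moreover, your proposed ``boundary move'' as stated --- replacing a block edge $[k,k+t]$ by the crossing edge $[k+t,2t+k]$ --- does not change the parity: you still have exactly $t$ length-$t$ edges and $\tfrac{r'-1}{2}$ short edges. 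What is needed is a move that changes $c^*$ by one. The paper does this with an extra length-$1$ edge $[m_{i,1},m_{i+1,1}]$ in the \emph{first} column, where $i$ is the isolated vertex of the small near $1$-factor $F$: this consumes one vertex from each of two adjacent rows and converts one length-$t$ edge into one length-$1$ edge. In your $B^*$ picture the analogous fix (Case~A, $r'\ge3$) is to drop $[t-1,2t-1]$, take the short edge $[2t-1,2t]$, and place a $1$-factor with list $\{1^{a_R'},2^{b_R'}\}$ on the even set $\{2t+1,\dots,2t+r'-1\}$; this yields $t-1$ length-$t$ edges and $\tfrac{r'+1}{2}$ short edges, flipping the parity, and your column swaps then finish. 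The case $r'=1$ genuinely requires a separate direct construction, which the paper writes out explicitly.
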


\begin{proof}
In view of Propositions \ref{big} and \ref{12t-notcop}, we may assume $a+b< \lfloor \frac{t-1}{2}\rfloor$
and  $\gcd(t,v)=1$.
Let $v=qt+r$, with $0<r<t$. We construct an incomplete matrix $M$ with $t$ rows and
$q+1$ columns whose elements are $\{0,1,2,\ldots,v-1\}$, namely the vertices of $K_v$, and the element
 on the $i$-th row and $j$-th column
is $m_{i,j}=(i-1)+(j-1)t$, for $i=1,\ldots,t$, $j=1,\ldots,q$ and $i=1,\ldots,r$ if $j=q+1$.
Hence the first $q$ columns are complete, while in the last column we have
only $r$ elements. It is easy to see that $m_{i,j+1}-m_{i,j}=t$ for any $i,j$, hence two elements adjacent on a row
can be connected by an edge of length $t$.
Also, we have $m_{i+1,j}-m_{i,j}=1$ for any $i,j$, so
two elements adjacent on a column can be connected by an edge of length $1$.

As in Proposition \ref{12t-notcop},
our aim is to construct $a$ edges of length $1$ and $b$ edges of length $2$ in such a way that the elements of $M$
not used to obtain these edges appear in all but one rows as an even number of adjacent entries.
So, in the following, we will explain how to construct the edges of length $1$ and $2$ in such a way that the
previous condition is always satisfied.

Since $v$ is odd and $r'$ is the remainder of the division of $v$ by $2t$,  $r'$ is an odd integer too.
Now we split the construction into two cases.
\medskip

\noindent Case 1. $q$ even.\\
Note that,  in this case, the  rows with an odd number of elements
are the first $r'=r$ rows of $M$.\\[2pt]
Case 1A. $a+b=\frac{r'-1}{2}$.\\
By Corollary \ref{Lemma12}, there exists a near $1$-factor $F$ of $K_{r'}$ with $\delta(F)=\{1^a,2^b\}$.
Then it suffices to take the edges of $F$.\\[2pt]
Case 1B. $a+b>\frac{r'-1}{2}$.\\
Firstly, consider the case $r'=1$. We apply Corollary \ref{Lemma12} to obtain a near $1$-factor $\tilde F$ such that
$\delta(\tilde F)=L'=\{1^{\lfloor\frac{a}{2} \rfloor}, 2^{\lfloor\frac{b}{2} \rfloor}\}$.
Next, in $K_v$ we take the edges of the graphs
$\tilde F+t-2|L'|-1$ and $\tilde F+2t-2|L'|-1$.
If $a$ and $b$ are both even, we have done. If $a$ is even and $b$ is odd, we still have to construct an edge of length $2$, take $[1,v-1]$.
If $a$ is odd and $b$ is even, we still have to construct an edge of length $1$, take $[0,v-1]$.
Finally, if $a$ and $b$ are both odd,  we still have to construct an edge of length $1$ and one of length $2$,
take $[v-t-1,v-t]$ and $[1,v-1]$.

Assume now $r'>1$. The construction in this subcase depends on the parity of $a+b-\frac{r'-1}{2}$.
It is easy to see that if $a+b-\frac{r'-1}{2}$ is even
there exist $\tilde a$, $\tilde b$ such that
$\tilde a+\tilde b=\frac{r'-1}{2}$ and $a-\tilde a=2\alpha$ and $b-\tilde b=2\beta$, for suitable $\alpha$ and $\beta$.
Now,
it is possible, in view of Corollary \ref{Lemma12},
to construct a near $1$-factor $F$ of $K_{r'}$ such that  $\delta(F)=\{1^{\tilde a},2^{\tilde b}\}$
and a \nf  $F'$ of $K_{2(\alpha+\beta)+1}$ such that $\delta(F')=\{1^\alpha,2^\beta\}$. Note that, since $a+b < \lfloor\frac{t-1}{2}\rfloor$, we have $2(\alpha+\beta)+1<t$.
Now, in $K_v$ we take the edges of $F+tq$, $F'+t-2(\alpha+\beta)-1$ and $F'+2t-2(\alpha+\beta)-1$,
these are exactly $a$ edges of length $1$ and $b$ edges of length $2$.

It is easy to see that if $a+b-\frac{r'-1}{2}$ is odd,
there exist $\tilde a$, $\tilde b$ such that
$\tilde a+\tilde b=\frac{r'-1}{2}$ and $a-\tilde a=2\alpha+1$ and $b-\tilde b=2\beta$, for suitable $\alpha$ and $\beta$.
Now, in $K_v$ we take again the edges of $F+tq$, $F'+t-2(\alpha+\beta)-1$ and $F'+2t-2(\alpha+\beta)-1$, where $F$ and $F'$
are defined as before.
Furthermore, we take  also the edge $[m_{i,1},m_{i+1,1}]$ of length $1$, where $i$ is the  isolated vertex of $F$
(observe that we may always assume $i\geq 1$).
Finally, observe that, since $r'<t$ and $a+b<\frac{t-1}{2}$ it follows
$t-2(\alpha+\beta)-1>r'$ and so $F+tq$ and  $F'+t-2(\alpha+\beta)-1$ involve distinct rows.
\medskip

\noindent Case 2. $q$ odd.\\
Note that,  in this case, the  rows with an odd number of elements are the last $2t-r'=t-r$ rows of $M$.\\[2pt]
Case 2A. $a+b=t-\frac{r'+1}{2}$.\\
By Corollary \ref{Lemma12} there exists a \nf $F$ such that $\delta(F)=\{1^a, 2^b\}$
so it is sufficient to
 take the edges of $F+t-2(a+b)-1$ in  $K_v$.\\[2pt]
Case 2B. $a+b>t-\frac{r'+1}{2}.$\\
If $a+b-(t-\frac{r'+1}{2})$ is even there exist $\tilde a$ and $\tilde b$ such that $
\tilde a+\tilde b=t-\frac{r'+1}{2}$ and $a-\tilde a=2\alpha$ and $b-\tilde b=2\beta$, for suitable $\alpha$ and $\beta$.
Now let $F$ and $F'$ as in Case 1B. In $K_v$ we take the edges of $F+v-t$, $F'$ and $F'+t$.\\
If $a+b-(t-\frac{r'+1}{2})$ is odd there exist $\tilde a$ and $\tilde b$ such that $
\tilde a+\tilde b=t-\frac{r'+1}{2}$ and $a-\tilde a=2\alpha+1$ and $b-\tilde b=2\beta$, for suitable $\alpha$ and $\beta$.
In $K_v$ we take again the edges of $F+v-t$, $F'$ and $F'+t$. Finally we take the edge $[m_{i,1},m_{i+1,1}]$ where $i$ is the isolated vertex of $F+r$.
\end{proof}

 \begin{ex}\label{ex1}
 Let $L=\{1^2,2^3,12^9\}$, hence, following the notation of Proposition \ref{12t-cop}, we have
 $v=29$, $t=12$, $q=2$ and $r=r'=5$. It is easy to see that we are in
 Case 1B of the proof of Proposition \ref{12t-cop}.
 Let $M$ be the incomplete matrix defined in Proposition  \ref{12t-cop} (see Fig. \ref{tab1}).
 We take $\tilde a=\tilde b=1$ and
 $F=\{[0,1],[2,4]\}\cup\{3\}$. Now $\alpha=0$ and $\beta=1$ and we consider $F'=\{[0,2]\}\cup\{1\}$.
 Following the proof, we have to take the edges of $F+24$, $F'+9$ and
 $F'+21$, namely the following edges
 $\{[24,25],[26,28],[9,11],[21,23]\}$. Since the isolated vertex of $F$ is $3$ we have to take also
 the edge $[m_{3,1},m_{4,1}]=[2,3]$. Up to now we have constructed all the edges of length $1$ and $2$.
 It is easy to see that the elements not used to construct these edges
 appear in an even number on all but one row. Hence it is immediate to construct the edges of length $12$.\\
  Now let $L=\{1^4,2^2,12^{14}\}$, hence we have $v=41$, $t=12$, $q=3$, $r=5$ and $r'=17$.
 So we are in Case 2B of the proof of Proposition \ref{12t-cop}.
  Let $M'$ be the incomplete matrix defined in Proposition  \ref{12t-cop} (see Fig. \ref{tab1}).
 Since $a+b-(t-\frac{r'+1}{2})$ is odd, we take $\tilde a=1$, $\tilde b=2$ and we consider $F=\{[0,2],[1,3],[4,5]\}\cup \{6\}$.
 Now $\alpha=1$ and $\beta=0$, so we consider $F'=\{[0,1]\}\cup\{2\}$.
 Following the proof now we have to take the edges of $F+29$, $F'$ and $F'+12$, namely
 $[29,31],[30,32],[33,34],[0,1],[12,13]$.
 Finally, we take the edge $[m_{11,1},m_{12,1}]=[10,11]$
 since the isolated vertex of $F+r$ is $11$.
  Now it is trivial to construct the edges of length $12$.
 \end{ex}

  \begin{figure}[!t]
  \begin{center}
 $$M=\left(\begin{array}{ccc}
  0 &  12 &  {\bf24} \\
   1 & 13 & {\bf25}   \\
   {\bf2} & \emph{14} & {\bf26}  \\
 {\bf3} & 15 & 27 \\
  4 & 16 & {\bf28}  \\
  5 & 17 & \\
  6 & 18 & \\
  7 & 19 & \\
  8 & 20 & \\
 {\bf 9} & {\bf21} & \\
  10 & 22 & \\
  {\bf11} & {\bf23} &
   \end{array} \right)
 \quad \quad \quad\quad M'=\left(\begin{array}{cccc}
  {\bf0} &  {\bf12} &  24 & 36\\
   {\bf1} & {\bf13} & 25  & 37 \\
   2 & 14 & 26 & 38 \\
  3 & 15 & 27 & 39\\
  4 & 16 & 28 & 40 \\
  5 & 17 & {\bf29} & \\
  6 & 18 & {\bf30} & \\
  7 & 19 & {\bf31} & \\
  8 & 20 & {\bf32} & \\
  9 & 21 & {\bf33} & \\
  {\bf10} & \emph{22} & {\bf34} & \\
  {\bf11} & 23 & 35 &
   \end{array} \right)$$
\caption{Matrices of Example \ref{ex1}.}\label{tab1}
\end{center}
   \end{figure}

\begin{corollar}
If $t\leq 11$,
$\MPP(\{1^a,2^b,t^c\})$ holds for any $a,b,c\geq 0$.
\end{corollar}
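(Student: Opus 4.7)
The plan is to cascade through the previously established propositions to eliminate all but finitely many cases, and then finish by exhibiting explicit near $1$-factors for the residual base cases.

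First, I would dispose of the degenerate situations. If $\min(a,b,c)=0$, the underlying set of $L$ has size at most two, so $\MPP(L)$ follows from Propositions~\ref{1n} and \ref{2lunghezze}. Thus I assume $a,b,c\geq 1$. By Proposition~\ref{big}, the conjecture holds whenever $a+b\geq\lfloor(t-1)/2\rfloor$, which for $t\leq 11$ leaves only the pairs with $a+b\leq 4$. By Proposition~\ref{12t-notcop}, the conjecture holds whenever $\gcd(t,v)>1$, where $v=2(a+b+c)+1$; so I may assume $\gcd(t,v)=1$. Finally, Proposition~\ref{12t-cop} handles a further range, according to the parity of $\lfloor v/t\rfloor$ and to $r'=v\bmod 2t$. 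What remains is, for each surviving pair $(a,b)$, a small number of residue classes of $v$ modulo $2t$ with $\gcd(t,r')=1$ in which the hypotheses of Proposition~\ref{12t-cop} fail.

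Next, I would invoke the doubling principle of Lemmas~\ref{LemmaComposition} and \ref{l^l}: if $\r L_0$ is a linear realization of $\{1^a,2^b,t^{c_0}\}$, then $k\cdot\p\{t^t\}+\r L_0$ is a linear realization of $\{1^a,2^b,t^{c_0+kt}\}$ for every $k\geq 0$. Hence each residue class of $v$ modulo $2t$ requires only a single base realization. Enumerating the survivors: for $t\leq 6$ the inequality $a+b<\lfloor(t-1)/2\rfloor$ is incompatible with $a,b\geq 1$, so nothing remains; for $t\in\{7,8\}$ only $(a,b)=(1,1)$ survives; for $t\in\{9,10\}$ only the three pairs with $a+b\in\{2,3\}$; and for $t=11$ only the six pairs with $a+b\in\{2,3,4\}$. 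In each of these finitely many cases, a short computation pins down the handful of residues $r'$ for which no previous proposition applies.

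The remaining work is purely constructive: for each such residue class one must exhibit one explicit linear realization, typically as a Skolem-type sequence in the spirit of those used in the proofs of Propositions~\ref{big} and \ref{12t-cop}. The main obstacle is that the smallest admissible $v_0$ in each exceptional residue class may have $c_0$ as small as $1$, leaving very little room to place the at most four short edges of length $1$ and $2$ while still partitioning the remaining vertices of $K_{v_0}$ into pairs at distance $t$. I would organize the case analysis in a table listing, for each admissible quadruple $(t,a,b,r')$, a concrete near $1$-factor of $K_{v_0}$ realizing $\{1^a,2^b,t^{c_0}\}$; Lemmas~\ref{LemmaComposition} and \ref{l^l} then lift each of these to the entire residue class, completing the proof.
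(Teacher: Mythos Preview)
Your cascade through Propositions~\ref{1n}, \ref{2lunghezze}, \ref{big}, \ref{12t-notcop}, \ref{12t-cop} and the enumeration of the surviving pairs $(a,b)$ for each $t\leq 11$ matches the paper exactly, so the overall architecture of your argument is the same. The one place where you diverge is the treatment of the residual families: the paper does not reduce each family to a single base case via $k\cdot\p\{t^{\,t}\}+\r L_0$, but instead re-uses the incomplete-matrix technique of Proposition~\ref{12t-cop} to give, for $t=8,9$, a uniform cyclic construction valid for every $q'$ simultaneously (and for $t=10,11$ appeals to direct computation). Your base-case-plus-lifting method is a legitimate alternative and arguably cleaner, since it turns each infinite family into a single finite verification; the paper's method has the advantage that the constructions are visibly of the same type as those already used. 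One small correction: your remark that ``$c_0$ may be as small as $1$'' is off, since the standing hypothesis $t\leq n=a+b+c$ forces $c_0\geq t-(a+b)$ (for instance $c_0\geq 9$ when $t=11$ and $a+b=2$), so there is in fact ample room in every base case to place the short edges.
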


\begin{proof}
For $t=3,4,5,6$ the thesis directly follows from Proposition \ref{big}.
If $t=7$ the thesis follows from Propositions \ref{big}, \ref{12t-notcop}
and \ref{12t-cop}.

Take now $t=8$, in view of Propositions \ref{big}
and \ref{12t-cop} we are left to consider the following classes of lists
$\{1,2,8^{8 q'+1}\}$ and $\{1,2,8^{8 q'+2}\}$.
If $L=\{1,2,8^{8q'+1}\}$, $v=16q'+7$. Let $q=2q'$ and $M$ be the incomplete matrix defined in Proposition
\ref{12t-cop}. Consider the edges $[m_{2,1},m_{1,q+1}]$ of length $8$,
$[m_{3,1},m_{4,1}]$ of length $1$ and $[m_{5,1},m_{7,1}]$ of length $2$.
Now all the rows of $M$ except the $6$-th one have an even number of adjacent entries so we can construct the remaining
edges of length $8$.\\
If $L=\{1,2,8^{8q'+2}\}$, $v=16q'+9$. Let $q=2q'+1$ and $M$ be the incomplete matrix defined in Proposition
\ref{12t-cop}. Consider the edges  $[m_{2,1},m_{3,q}]$ of length $8$,
$[m_{4,1},m_{5,1}]$ of length $1$ and $[m_{6,1},m_{8,1}]$ of length $2$.
Now all the rows of $M$ except the $7$-th one have an even number of adjacent entries so we can construct the remaining
edges of length $8$.

For $t=9$, in view of Propositions \ref{big}, \ref{12t-notcop}
and \ref{12t-cop} we are left to consider the following classes of lists
$\{1,2,9^{9q'+1}\}$ and $\{1,2,9^{9q'+3}\}$.
If $L=\{1,2,9^{9q'+1}\}$, $v=18q'+7$. Let $q=2q'$ and $M$ be the incomplete matrix defined in Proposition
\ref{12t-cop}. Consider the edges $[m_{3,1},m_{1,q+1}]$ of length $9$,
$[m_{2,1},m_{4,1}]$ of length $2$ and $[m_{5,1},m_{6,1}]$ of length $1$.
Now all the rows of $M$ except the $7$-th one have an even number of adjacent entries so we can construct the remaining
edges of length $9$.\\
If $L=\{1,2,9^{9q'+3}\}$, $v=18q'+11$. Let $q=2q'+1$ and $M$ be the incomplete matrix defined in Proposition
\ref{12t-cop}. Consider the edges $[m_{7,1},m_{9,q}]$ of length $9$,
$[m_{3,1},m_{4,1}]$ of length $1$ and $[m_{6,1},m_{8,1}]$ of length $2$.
Now all the rows of $M$ except the $5$-th one have an even number of adjacent entries so we can construct the remaining
edges of length $9$.

Finally, for $t=10,11$ it is easy to see that we are left to consider eight classes of lists for each value of $t$.
By direct computation we have checked that the conjecture holds also in these cases.
\end{proof}

To conclude we present an  example for lists not considered in previous propositions.
We consider the ``most difficult'' case namely when $a=b=1$.
It is important to underline that the strategy used for these
particular cases can be generalized to infinite classes of lists.

\begin{ex}\label{ex2}
Take $L=\{1,2,19^{23}\}$, hence $v=51$.
Let $M$ be the incomplete matrix defined in Proposition
\ref{12t-cop} (see Fig. \ref{tab2}) and take the following edges
$[38,6],[39,7],[40,$ $8], [41,9]$ of length $19$,
$[42,43]$ of length $1$ and $[48,50]$ of length $2$.
Now all the rows of $M$ except the $12$-th one  have an even number of adjacent entries so we can construct the remaining
edges of length $19$. It is easy to see that this construction can be easily generalized to any list
$\{1,2,19^{19k+4}\}$.\\
Take now $L=\{1,2,19^{28}\}$, hence $v=61$.
Let $M'$ be the incomplete matrix defined in Proposition
\ref{12t-cop} (see Fig. \ref{tab2}) and take the following edges
$[57,15],[58,16],$ $[59,17], [60,18],[0,42],[1,43],[2,44],[3,45],[8,50]$ of length $19$,
$[9,11]$ of length $2$ and $[13,14]$ of length $1$.
Now all the rows of $M'$ except the $11$-th one have an even number of adjacent entries so we can construct the remaining
edges of length $19$. It is easy to see that this construction can be easily generalized to any list
$\{1,2,19^{19k+9}\}$.
 \end{ex}
 
  \begin{figure}[!t]
  \begin{center}
 $$M=\left(\begin{array}{ccc}
  0 &  19 &  {\bf38} \\
   1 & 20 & {\bf39}   \\
   2 & 21 & {\bf40}  \\
 3 & 22 & {\bf41} \\
  4 & 23 & {\bf42}  \\
  5 & 24 & {\bf 43}\\
  {\bf6} & 25 & 44\\
  {\bf7} & 26 & 45\\
  {\bf8} & 27 & 46\\
 {\bf 9} & 28 & 47\\
  10 & 29 & {\bf 48}\\
  11 & 30 & \emph{49}\\
  12 & 31 & {\bf50}\\
  13 & 32 &\\
  14 & 33 & \\
  15 & 34 & \\
  16 & 35 & \\
  17 & 36 & \\
  18 & 37 &
   \end{array} \right)
 \quad \quad \quad\quad M'=\left(\begin{array}{cccc}
  {\bf0} &  19 &  38 & {\bf57}\\
   {\bf1} & 20 & 39  & {\bf58} \\
   {\bf2} & 21 & 40 & {\bf59} \\
  {\bf3} & 22 & 41 & {\bf60}\\
  4 & 23 & {\bf42} &  \\
  5 & 24 & {\bf43} & \\
  6 & 25 & {\bf44} & \\
  7 & 26 & {\bf45} & \\
  {\bf8} & 27 & 46 & \\
  {\bf9} & 28 & 47 & \\
  \emph{10} & 29 & 48 & \\
  {\bf11} & 30 & 49 & \\
  12 & 31 & {\bf50} & \\
  {\bf13} & 32 & 51 & \\
  {\bf14} & 33 & 52 & \\
  {\bf15} & 34 & 53 & \\
  {\bf16} & 35 & 54 & \\
  {\bf17} & 36 & 55 & \\
  {\bf18} & 37 & 56 &
   \end{array} \right)$$
   \caption{Matrices of Example \ref{ex2}.}\label{tab2}
\end{center}
   \end{figure}

We have to point out that the constructions illustrated in the previous example work for any values of
$t$ if $L=\{1,2,t^c\}$ with $c$ belonging to some suitable congruence classes modulo $t$.
Indeed for $t=19$ we are able to prove $\MPP(L)$ for any $c$, but the construction of the near $1$-factor depends
on the congruence class of $c$ modulo $19$.
In our opinion it seems not possible to present a general construction, but it is necessary to split
the proof of the remaining open case in several subcases.

\section*{Acknowledgments}

The authors thank  the anonymous referees for their useful suggestions and comments.

\end{document}